\def\HH{\mathcal{H}}
\def\R{{ \mathbb{R}}}
\newtheorem{theorem}{Theorem}
\newtheorem{lemma}[theorem]{Lemma}
\newtheorem{proposition}[theorem]{Proposition}
\newtheorem{remark}[theorem]{Remark}
\newenvironment{proof}[1][Proof]{\textbf{#1.} }{\ \rule{0.5em}{0.5em}}
\def\cal{\mathcal}
\renewcommand{\geq}{\geqslant}
\def\leq{\leqslant}
\def\HH{{\cal H}}
\def\cal{\mathcal}
\def\1{{\mathbf{1}}}
\def\Un{{\bf 1}}
\def\sk{{\mathbb{D}}}
\def\oT{{[0{,}T]}}
\def\ffi{{\varphi}}
\def\1{{\mathbf{1}}}
\def\0.5{{\frac{1}{2}}}
\renewcommand{\thefootnote}{\fnsymbol{footnote}}
\begin{document}

\renewcommand{\thefootnote}{\arabic{footnote}}

\begin{center}
{\Large \textbf{Least squares estimator of fractional Ornstein Uhlenbeck processes  with periodic mean }} \\[0pt]
~\\[0pt]
Salwa Bajja\footnote{%
 National School of Applied Sciences - Marrakesh, Cadi Ayyad
University, Marrakesh, Morocco. Email: \texttt{salwa.bajja@gmail.com }}, Khalifa Es-Sebaiy\footnote{%
National School of Applied Sciences - Marrakesh, Cadi Ayyad
University, Marrakesh, Morocco. Email: \texttt{k.essebaiy@uca.ma}}
and Lauri Viitasaari
\footnote{%
Department of Mathematics and System Analysis, Aalto University School of
Science, P.O. Box 11100, FIN-00076 Aalto, Finland. E-mail:\texttt{lauri.viitasaari@aalto.fi}}%
\\[0pt]
\textit{Cadi Ayyad University and Aalto University}\\[0pt]
~\\[0pt]
\end{center}
\begin{abstract}
\medskip
 We first study the drift parameter estimation of the fractional  Ornstein-Uhlenbeck process (fOU) with
periodic mean for every $\frac{1}{2}<H<1$. More precisely, we extend
the consistency proved in \cite{DFW} for $\frac{1}{2}<H<\frac{3}{4}$
to the strong consistency for  any $\frac{1}{2}<H<1$ on the one
hand, and on the other, we also discuss the asymptotic normality
given in \cite{DFW}. In the second main part of the paper, we  study
the strong consistency and the asymptotic normality of the fOU of
the second kind with periodic mean for any $\frac{1}{2}<H<1$.
\end{abstract}
\noindent {{\bf Keywords:}} Fractional Ornstein-Uhlenbeck processes;
Least squares estimator; Malliavin calculus.
\section{Introduction}
Consider the fractional Ornstein-Uhlenbeck process (fOU)
$X=\left\{X_{t},t\geq0\right\}$ given by the following linear
stochastic differential equation
\begin{eqnarray}
  dX_{t} &=& -\alpha X_{t}dt+dB_{t}^{H},\ \ X_{0}=0,\label{fOU-equ}
\end{eqnarray}
where $\alpha$ is an unknown parameter, and $B^H = \left\{B^H_t, t
\geq 0\right\}$ is a fractional Brownian motion (fBm) with Hurst
parameter $H \in ( 0
, 1)$.\\
The drift parameter estimation problem  for the fOU $X$ observed in
continuous time and discrete time has been studied by using several
approaches (see \cite{KL,HN, HS, BI, EEV, EN}). In a general case
when the process $X$ is driven by a Gaussian process, \cite{EEO}
studied the non-ergodic case corresponding to $\alpha<0$. They
provided sufficient conditions, based on the properties of the
driving Gaussian process, to ensure that least squares
estimators-type of $\alpha$ are strongly consistent and
asymptotically Cauchy. On the other hand, using  Malliavin-calculus
advances (see \cite{NP}), \cite{EV} provided new techniques to
statistical inference for stochastic differential equations related
to stationary Gaussian processes, and they used their result to
study drift parameter estimation problems for some stochastic
differential equations driven by fractional Brownian motion with
fixed-time-step observations (in particular for the fOU $X$ given in
(\ref{fOU-equ}) with $\alpha>0$). Similarly, in \cite{sot-vii} the
authors studied an ergodicity estimator for the parameter $\alpha$
in \eqref{fOU-equ}, where the fractional Brownian motion is replaced
with a general Gaussian
 process having stationary increments.

 Recently,  \cite{DFW} studied a drift parameter estimation problem for  the above equation (\ref{fOU-equ})
 with slight modifications on the drift. More precisely, they considered the
following fractional Ornstein Uhlenbeck process with periodic mean
function
\begin{eqnarray}
  dX_{t} &=& \left(\sum_{i=1}^{p}\mu_{i}\varphi_{i}(t)-\alpha X_{t}\right)dt+dB^{H}_{t},\quad X_0=0
  \label{FOU with PF}
\end{eqnarray}
where $B^H$ is a fBm   with Hurst parameter $ \frac12<H<1$, the
functions $\varphi_{i}, i=1,\ldots,p$ are bounded by a constant
$C>0$ and periodic with the same period $\nu>0$, and the real
numbers $\mu_{i}, i=1,\ldots,p$ together with  $\alpha>0$ are
considered unknown parameters. The motivation comes from the fact that such equation can be used to model time series which are a combination of a stationary process and periodicities. In \cite{DFW} the authors proposed the least squares
estimator (LSE) to estimate
$\theta:=(\mu_{1},\ldots,\mu_{p},\alpha)^{\top}$ based on the
continuous-time observations
 $\{X_t,0\leq t\leq n\nu\}$ as $n\rightarrow\infty$.
For the sake of simplicity, we assume that the functions
$\varphi_{i}, i = 1,\ldots, p$
 are orthonormal in $L^{2}([0, \nu], \nu^{-1}dt)$, i.e.
$\int_0^{\nu}\varphi_{i}(t)\varphi_{j}(t)\nu^{-1}dt = \delta_{ij}$.
We also choose $\nu=1$.

Let us consider the LSE $\widehat{\theta}_{n}$ of $\theta$ given in
\cite{DFW}  by
\begin{eqnarray}
  \widehat{\theta}_{n} &:=&Q_{n}^{-1}P_{n} \label{LSE FOU with PF}
\end{eqnarray}
where
\begin{eqnarray*}
  P_{n} :=\left(\int_{0}^{n}\varphi_{1}(t)dX_{t},\ldots,\int_{0}^{n}\varphi_{p}(t)
  dX_{t},-\int_{0}^{n}X_{t}dX_{t}\right)^{\top}, \quad Q_{n} &=& \left(
              \begin{array}{cc}
                G_{n} & -a_{n} \\
                -a^{\top}_{n} & b_{n} \\
              \end{array}
            \right)
\end{eqnarray*}
with
\begin{eqnarray*}
  G_{n} &:=& \left(\int_{0}^{n} \varphi_{i}(t)\varphi_{j}(t)dt  \right)_{1\leq i,j\leq p};
\end{eqnarray*}
\[a_{n}^{\top}:=\left(\int_{0}^{n}\varphi_{1}(t)X_{t}dt,\ldots,\int_{0}^{n}\varphi_{p}(t)X_{t}dt\right);
\quad b_{n}:=\int_{0}^{n}X_{t}^{2}dt.\] Let us describe what is
known about the asymptotic behavior of $\widehat{\theta}_n$: if
$\frac12<H<\frac34$, then
\begin{itemize}
\item as $n\rightarrow\infty$,
 \begin{equation}\label{consistency FOU
with PF} \widehat{\theta}_n \longrightarrow\theta,\end{equation} in
probability, see \cite[Theorem 1]{DFW};
\item
as $n\rightarrow\infty$,
\begin{equation}
n^{1-H}\big(\widehat{\theta}_n-\theta\big) \mbox{ converges in law
to a normal distribution},
\end{equation} see  \cite[Theorem 2]{DFW}.
\end{itemize}

 In the first main part of our paper  we extend the convergence in probability
(\ref{consistency FOU with PF}) proved when $\frac12<H<\frac34$ to
the almost sure convergence  for  every $\frac12<H<1$. More
precisely, we establish the strong consistency for the LSE
$\widehat{\theta}_n$ for  every $\frac12<H<1$. On the other hand, in
Theorem \ref{asymp. norm. FOU with PF} we correct the covariance
matrix of the normal limit distribution given in \cite[Theorem
1]{DFW} because the proof of \cite[Proposition 5.1]{DFW} relies on a
possibly flawed technique in line -2 page 13.

Our second main interest in this paper is to estimate the drift
parameters of the fractional Ornstein Uhlenbeck process of the
second kind with periodic mean, that is the solution of the
following equation
\begin{eqnarray}
\label{eq: SDE second kind fOU}
  dX^{(1)}_{t} &=& \left(\sum_{i=1}^{p}\mu_{i}\varphi_{i}(t)-\alpha X^{(1)}_{t}\right)dt+dY_{t}^{(1)},
   \ \ X_{0}=0
\end{eqnarray}
where $Y_{t}^{(1)}:=\int_{0}^{t}e^{-s}dB^H_{a_{s}}$ with
$a_{t}=He^{\frac{t}{H}}$ and  $B^H$ is a fBm with Hurst parameter
$\frac12<H<1$. The parameter estimation for the fOU  of the second
kind without periodicities is well studied in
 several recent papers (see    \cite{EV, AM, AV, EET}).\\
Let $ \widetilde{\theta}_{n}$ be  the LSE of $\theta$ defined by
\begin{eqnarray}
\label{LSE FOUSK PF}
\widetilde{\theta}_{n}=\widetilde{Q}_{n}^{-1}\widetilde{P}_{n}
\end{eqnarray}
where
\begin{eqnarray*}
  \widetilde{P}_{n} :=\left(\int_{0}^{n}\varphi_{1}(t)dX^{(1)}_{t},\ldots,\int_{0}^{n}
  \varphi_{p}(t)dX^{(1)}_{t},-\int_{0}^{n}X^{(1)}_{t}dX^{(1)}_{t}\right)^{\top};
   \quad \widetilde{Q}_{n} &=& \left(
              \begin{array}{cc}
                G_{n} & -\widetilde{a}_{n} \\
                -\widetilde{a}^{\top}_{n} & \widetilde{b}_{n} \\
              \end{array}
            \right)
\end{eqnarray*}
with  $G_{n}$ is given as in above, and
\[\widetilde{a}_{n}^{\top}:=\left(\int_{0}^{n}\varphi_{1}(t)X^{(1)}_{t}dt,\ldots,
\int_{0}^{n}\varphi_{p}(t)X^{(1)}_{t}dt\right); \quad
\widetilde{b}_{n}:=\int_{0}^{n}(X^{(1)}_{t})^{2}dt.\] Let us now
describe the results we establish for the asymptotic behavior of the
LSE  $\widetilde{\theta}_{n}$: if $\frac12<H<1$, then
\begin{itemize}
\item as
$n\rightarrow\infty$,
 \begin{equation*} \widetilde{\theta}_n \longrightarrow\theta,\end{equation*}
almost surely, see Theorem  \ref{consitency FOUSK with PF};
\item as
$n\rightarrow\infty$,
\begin{eqnarray*}
  \sqrt{n}(\widetilde{\theta}_{n}-\theta) &\overset{law}{\longrightarrow}&
  \mathcal{N}(0,\overline{M}^{\top}\overline{\Sigma}\ \overline{M})
\end{eqnarray*} see  Theorem \ref{asymp. norm. FOUSK with PF}.
\end{itemize}

Our article is structured as follows. In section 2, we establish the
strong consistency for the LSE $\widehat{\theta}_n$  for every
$\frac12<H<1$. Moreover, we discuss the asymptotic normality given
in  \cite{DFW}. Section 3  is devoted to  study the strong
consistency and the asymptotic normality for the LSE
$\widetilde{\theta}_n$ for any $\frac12<H<1$. Finally, some basic
elements of Malliavin calculus with respect to fBm which are helpful
for some of the arguments we use, and some of the technical results
used in various proofs are in the Appendix.

\section{ LSE for fOU  with periodic mean}
From (\ref{LSE FOU with PF}) and (\ref{FOU with PF}) we can write
(see \cite{DFW} for details)
\begin{eqnarray}
\label{decomp. of LSE fOU PF}
 \widehat{\theta}_{n}&=&\theta+ Q_{n}^{-1}R_{n}
\end{eqnarray}
with an explicit expression of the matrix $Q_{n}^{-1}$
\begin{eqnarray}
\label{eq:matrice Q}
  Q_{n}^{-1} &=&\frac{1}{n}\left(
          \begin{array}{cc}
            I_{p}+ \gamma_{n} \Lambda_{n} \Lambda_{n}^{\top} & -\gamma_{n} \Lambda_{n} \\
            -\gamma_{n} \Lambda_{n}^{\top} & \gamma_{n} \\
          \end{array}
        \right) \nonumber
\end{eqnarray}
and
\begin{eqnarray*}
\label{eq:vector R } R_{n} :=
\left(\int_{0}^{n}\varphi_{1}(t)dB_{t}^{H},\ldots,\int_{0}^{n}
\varphi_{p}(t)dB_{t}^{H},-\int_{0}^{n}X_{t}\delta
B_{t}^{H}\right)^{\top},
\end{eqnarray*}
where
\begin{eqnarray*}
  \Lambda_{n} &=& (\Lambda_{n,1},\ldots,\Lambda_{n,p})^{\top}:=\left(\frac{1}{n}\int_{0}^{n}\varphi_{1}
  (t)X_{t}dt,\ldots,\frac{1}{n}\int_{0}^{n}\varphi_{p}(t)X_{t}dt\right)^{\top}\\
\end{eqnarray*}
and
\begin{eqnarray*}
   \gamma_{n}&:=&\left(\frac{1}{n}\int_{0}^{n}X_{t}^{2}dt-\sum_{i=1}^{p}\Lambda^{2}_{n,i}\right)^{-1}.
\end{eqnarray*}
 On the other hand, it is readily checked that we have the
following explicit expression for the solution $X$ of (\ref{FOU with
PF})
\begin{eqnarray}
 X_{t} &=& h(t)+Z_{t},\quad t\geq0, \label{explicit exp. of FOU PF}
\end{eqnarray}
where
\begin{eqnarray}
  h(t) := e^{-\alpha t}\sum_{i=1}^{p}\mu_{i}\int_{0}^{t}e^{\alpha
  s}\varphi_{i}(s)ds,\quad Z_{t}&:=& e^{-\alpha t}\int_{0}^{t}e^{\alpha
  s}dB_{s}^{H}.\label{exp. of h and Z}
\end{eqnarray}
Moreover the process $Z$ is a fOU, that is solution of the following
equation
\begin{eqnarray}
  dZ_{t} &=&  -\alpha Z_{t}dt+dB^{H}_{t},\quad Z_0=0.\label{FOU}
\end{eqnarray}  The following result establishes the strong consistency
of the LSE $\widehat{\theta}_{n}$.
\begin{theorem}Assume that $\frac{1}{2}<H<1$.
Then  \begin{eqnarray*}\label{cv p.s. of theta
hat}\widehat{\theta}_{n}\longrightarrow \theta\end{eqnarray*} almost
surely as $n\rightarrow \infty$.
\end{theorem}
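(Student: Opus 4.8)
\ Starting from \eqref{decomp. of LSE fOU PF} and the explicit form of $Q_n^{-1}$, I would write $\widehat{\theta}_n-\theta=Q_n^{-1}R_n=\frac1n\,M_nR_n$, where
\[
M_n:=\left(\begin{array}{cc} I_p+\gamma_n\Lambda_n\Lambda_n^{\top} & -\gamma_n\Lambda_n\\ -\gamma_n\Lambda_n^{\top} & \gamma_n\end{array}\right).
\]
Thus it suffices to prove that $M_n$ converges almost surely to a finite deterministic matrix $M$ and that $\frac1nR_n\to0$ almost surely, since then $\widehat{\theta}_n-\theta=M_n\big(\frac1nR_n\big)\to M\cdot0=0$ a.s. Here $n$ ranges over the positive integers (the observation horizons $n\nu$, $\nu=1$), and the recurring tool will be the following: every scalar quantity appearing below belongs to a fixed finite sum of Wiener chaoses of $B^H$, so by hypercontractivity its $L^{2k}$-norms are controlled by powers of its variance; hence whenever that variance decays polynomially in $n$ one may pick $k$ so large that $\sum_n\esp\big[|\cdot|^{2k}\big]<\infty$ and conclude almost sure convergence to the mean via Borel--Cantelli.

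\noindent\emph{Step 1: $M_n\to M$.}\ Write $X_t=h(t)+Z_t$ as in \eqref{explicit exp. of FOU PF}, with $h$ bounded and, classically, $\esp[Z_t^2]$ converging to a finite positive limit $\sigma^2$. I would check: (i) $\frac1n\int_0^nZ_t^2\,dt\to\sigma^2$ a.s.\ -- the ergodic theorem for the fractional Ornstein--Uhlenbeck process -- obtained from the fact that its variance vanishes at a polynomial rate (the covariance $r$ of the stationary fOU satisfies $r(u)=O(|u|^{2H-2})$, so $r^2$ is integrable for $H<\tfrac34$ and $\int_0^nr^2$ grows at most polynomially in general) together with the chaos/Borel--Cantelli tool; (ii) $\frac1n\int_0^n\varphi_i(t)Z_t\,dt\to0$ and $\frac1n\int_0^nh(t)Z_t\,dt\to0$ a.s., since these are centred first-chaos variables of variance $O(n^{2H-2})$; (iii) the deterministic pieces converge, because $h(t)$ tends exponentially fast to the $1$-periodic function $\widetilde h(t):=\sum_i\mu_i\int_{-\infty}^te^{-\alpha(t-s)}\varphi_i(s)\,ds$, so that $\frac1n\int_0^n\varphi_ih\,dt\to\ell_i:=\int_0^1\varphi_i\widetilde h$ and $\frac1n\int_0^nh^2\,dt\to\|\widetilde h\|_{L^2([0,1])}^2$ (here one uses $n\in\N$). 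Combining these, $\Lambda_{n,i}\to\ell_i$ and $\gamma_n^{-1}\to\sigma^2+\|\widetilde h\|_{L^2([0,1])}^2-\sum_{i=1}^p\ell_i^2\geq\sigma^2>0$ by Bessel's inequality (the $\varphi_i$ being orthonormal in $L^2([0,1])$), so $\gamma_n$ has a finite positive limit and $M_n\to M$ a.s.\ with $M$ finite.

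\noindent\emph{Step 2: $\frac1nR_n\to0$.}\ The first $p$ coordinates $\frac1n\int_0^n\varphi_i(t)\,dB_t^H$ are centred first-chaos variables of variance $O(n^{2H-2})$, hence tend to $0$ a.s. For the last coordinate, split $-\frac1n\int_0^nX_t\,\delta B_t^H=-\frac1n\int_0^nh(t)\,dB_t^H-\frac1n\int_0^nZ_t\,\delta B_t^H$; the first term is again a centred first-chaos variable of variance $O(n^{2H-2})$, so it tends to $0$ a.s. For the Skorohod term I would use $\esp\big[\int_0^nZ_t\,\delta B_t^H\big]=0$ together with the identity, valid for $H>\tfrac12$ by the relation between the Skorohod and the Young integrals,
\[
\int_0^nZ_t\,\delta B_t^H=\tfrac12Z_n^2+\alpha\int_0^nZ_t^2\,dt-H(2H-1)\int_0^n\!\!\int_0^te^{-\alpha u}u^{2H-2}\,du\,dt,
\]
whose random part $\tfrac12Z_n^2+\alpha\int_0^nZ_t^2\,dt$ lies in the Wiener chaoses of order $0$ and $2$ and has variance $o(n^2)$; hence $\frac1n\int_0^nZ_t\,\delta B_t^H\to0$ in $L^2$ and, by the chaos/Borel--Cantelli tool, almost surely. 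Therefore $\frac1nR_n\to0$ a.s., and with Step 1 this proves $\widehat{\theta}_n\to\theta$ a.s.

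\noindent\emph{Main obstacle.}\ Convergence to $\theta$ in probability is essentially \cite{DFW} and extends effortlessly to all $H\in(\tfrac12,1)$; the real work is the upgrade to \emph{almost sure} convergence, because for $H\geq\tfrac34$ the second moments of the normalised statistics are no longer summable, forcing one to exploit membership in a fixed Wiener chaos to reach summable higher moments (and to note that the estimator is only evaluated at integer horizons). A secondary delicate point is the Skorohod integral $\int_0^nZ_t\,\delta B_t^H$ -- the term treated by the possibly flawed argument in \cite{DFW} -- which is cleanest to handle through its vanishing mean and an $o(n^2)$ variance bound rather than an explicit cancellation of growing terms.
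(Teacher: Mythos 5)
Your proof is correct for all $\tfrac12<H<1$, and its skeleton coincides with the paper's: both start from $\widehat{\theta}_n-\theta=\left(nQ_n^{-1}\right)\left(\tfrac1nR_n\right)$, treat the first-chaos coordinates $\tfrac1n\int_0^n\varphi_i(t)dB_t^H$ (and $\tfrac1n\int_0^nh(t)dB_t^H$) by the variance bound $O(n^{2H-2})$ plus the a.s.-rate lemma (Lemma \ref{lemma:principal lemma}, i.e.\ exactly your hypercontractivity/Borel--Cantelli tool), and identify the same limit matrix $M$. The genuine divergence is in the quadratic Skorohod term and in the matrix limit. For $\int_0^nX_t\delta B_t^H$ the paper uses the Skorohod--Young relation and then the \emph{ergodic theorem} for the stationary fOU $\tilde Z$ to compute the a.s.\ limit $\alpha^{1-2H}H\Gamma(2H)$ of $\tfrac{\alpha}{n}\int_0^nZ_t^2dt$, which cancels exactly against the deterministic Malliavin trace term; you instead use the same relation to exhibit $\int_0^nZ_t\delta B_t^H$ as the \emph{centred} version of $\tfrac12Z_n^2+\alpha\int_0^nZ_t^2dt$ and conclude from a second-chaos variance bound. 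That route is valid, avoids computing or cancelling the explicit constant and yields a quantitative a.s.\ rate, but it requires the covariance decay $\mathrm{Cov}(Z_s,Z_t)=O(|t-s|^{2H-2})$ (Cheridito--Kawaguchi--Maejima) and, for the Borel--Cantelli step, the polynomial bound $\mathrm{Var}\left(\int_0^nZ_t^2dt\right)=O\!\left(n^{\max(1,4H-2)}\right)$ rather than the weaker ``$o(n^2)$'' you state -- your own computation in Step 1(i) already gives this, so it is a matter of wording, not a gap. Likewise, for $nQ_n^{-1}\to M$ the paper simply cites \cite[Proposition 4.2]{DFW} (Proposition \ref{cv a.s. nQ{-1}_{n}}, proved there via ergodicity), whereas you re-derive it by the same moment/chaos method; that is fine and consistent with the stated $M$. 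In short: same decomposition and same first-chaos treatment, but ergodic theorem plus explicit cancellation (paper) versus zero mean plus chaos variance estimates (you) for the key term -- the paper's version is softer and rate-free, yours is more self-contained and quantitative.
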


\begin{proof}
Using the decomposition (\ref{decomp. of LSE fOU PF}) we can write
$\widehat{\theta}_{n}=\theta+
\left(nQ_{n}^{-1}\right)\left(\frac{1}{n}R_{n}\right)$. By combining
this with Propositions \ref{cv a.s. frac{1}{n}R_{n}} and \ref{cv
a.s. nQ{-1}_{n}} below the result follows at once.
\end{proof}

\begin{proposition}\label{cv a.s. frac{1}{n}R_{n}}Assume that $\frac12<H<1$.
Then,   as $n\rightarrow\infty$
 \[\frac{1}{n}R_{n} \longrightarrow0 \]
almost surely.
\end{proposition}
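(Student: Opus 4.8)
The plan is to treat the first $p$ coordinates of $R_n$, which are the Wiener integrals $\int_0^n\varphi_i(t)\,dB_t^H$, separately from the last coordinate $\int_0^n X_t\,\delta B_t^H$. In each case the scheme is identical: first establish an $L^2$-type decay, and then upgrade it to almost sure convergence by using that the random variable in question lives in a fixed Wiener chaos, so all its $L^q$-norms are comparable (hypercontractivity), and finishing with the Borel--Cantelli lemma. Since the observation window is $[0,n]$ with $n\in\N$, it suffices to argue along integers, so no continuous-parameter maximal inequality is needed.

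For $1\le i\le p$ the integral $\int_0^n\varphi_i(t)\,dB_t^H$ is an element of the first chaos, and since $H>\frac12$ its variance equals
\[
\alpha_H\int_0^n\int_0^n\varphi_i(s)\varphi_i(t)\,|s-t|^{2H-2}\,ds\,dt\le c\,n^{2H},\qquad \alpha_H:=H(2H-1),
\]
the bound coming from $|\varphi_i|\le C$. By equivalence of moments on the first chaos, $\esp\big|\int_0^n\varphi_i(t)\,dB_t^H\big|^{2m}\le c_m n^{2mH}$ for every integer $m\ge1$, hence $\esp\big|\frac1n\int_0^n\varphi_i(t)\,dB_t^H\big|^{2m}\le c_m n^{-2m(1-H)}$. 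Choosing $m$ with $2m(1-H)>1$ makes $\sum_n c_m n^{-2m(1-H)}$ converge, so Borel--Cantelli gives $\frac1n\int_0^n\varphi_i(t)\,dB_t^H\to0$ almost surely.

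For the last coordinate I would use $X_t=h(t)+Z_t$ from (\ref{explicit exp. of FOU PF}) and linearity of the divergence integral to write $\int_0^n X_t\,\delta B_t^H=\int_0^n h(t)\,dB_t^H+\int_0^n Z_t\,\delta B_t^H$. Since $\alpha>0$ and the $\varphi_i$ are bounded, $h$ is a bounded deterministic function, so the first term, divided by $n$, goes to $0$ a.s.\ exactly as in the previous paragraph. For the second term, recall that for $H>\frac12$ the pathwise (Young) integral $\int_0^n Z_t\,dB_t^H$ can be computed in two ways: by the chain rule $Z_n^2=2\int_0^n Z_t\,dZ_t$ together with (\ref{FOU}) it equals $\frac{Z_n^2}{2}+\alpha\int_0^n Z_t^2\,dt$, while by the Skorokhod--pathwise comparison it equals $\int_0^n Z_t\,\delta B_t^H+\alpha_H\int_0^n\int_0^n D_sZ_t\,|t-s|^{2H-2}\,ds\,dt$ with $D_sZ_t=e^{-\alpha(t-s)}\1_{[0,t]}(s)$. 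Equating and evaluating the double integral (substitute $u=t-s$) yields
\[
\int_0^n Z_t\,\delta B_t^H=\frac{Z_n^2}{2}+\alpha\int_0^n Z_t^2\,dt-\alpha_H\int_0^n\Big(\int_0^t e^{-\alpha u}u^{2H-2}\,du\Big)dt .
\]
Dividing by $n$: the first term goes to $0$ a.s.\ because $Z_n$ is Gaussian with $\sup_n\var(Z_n)<\infty$, so $\sum_n\esp[(Z_n^2/n)^2]=\sum_n 3(\var Z_n)^2n^{-2}<\infty$; the second converges a.s., by the ergodic property of the fractional Ornstein--Uhlenbeck process $Z$, to $\alpha\,\esp[\overline Z_0^2]=H\alpha^{1-2H}\Gamma(2H)$ (with $\overline Z$ the stationary fOU); and the deterministic third term tends to $\alpha_H\int_0^\infty e^{-\alpha u}u^{2H-2}\,du=H\alpha^{1-2H}\Gamma(2H)$. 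These two limits coincide, so $\frac1n\int_0^n Z_t\,\delta B_t^H\to0$ a.s., and therefore $\frac1n R_n\to0$ a.s.

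The routine part is the moment--Borel--Cantelli upgrade of the $L^2$-estimates. The main obstacle is the quadratic coordinate: one must derive the displayed decomposition carefully (keeping track of the trace term from the Skorokhod/pathwise comparison and of $D_sZ_t$) and then verify that the ergodic limit $\alpha\,\esp[\overline Z_0^2]$ exactly cancels the deterministic trace limit. An alternative that avoids the explicit identity is to observe that $\int_0^n Z_t\,\delta B_t^H$ lies in the second chaos and that the $\HH^{\otimes2}$-norm of its kernel is $O(n^{4H-2}+n)=o(n^2)$; hypercontractivity on the second chaos plus Borel--Cantelli then gives the same conclusion, at the cost of that (somewhat more involved) norm estimate.
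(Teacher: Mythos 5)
Your proposal is correct and follows essentially the same route as the paper: for the first-chaos coordinates an $L^2$ bound of order $n^{H-1}$ upgraded to almost sure convergence (the paper invokes Lemma \ref{lemma:principal lemma}, you use equivalence of moments plus Borel--Cantelli, which is the same mechanism), and for the last coordinate the pathwise--Skorokhod correction term, the ergodic limit $\frac{\alpha}{n}\int_0^n Z_t^2\,dt\to H\alpha^{1-2H}\Gamma(2H)$, and its exact cancellation against the deterministic trace term. Splitting off the bounded deterministic $h$ by linearity of $\delta$ before applying the correction, rather than expanding $\frac{1}{n}\int_0^n X_t\,dB_t^H$ with the cross terms as the paper does, is a mild streamlining, not a different argument.
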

\begin{proof}
 Since
\begin{eqnarray}\label{varphi bounded}
  \sup_{t\geq0}|\varphi_{i}(t)|\leq C<\infty,\quad i=1,\ldots,p
\end{eqnarray} we have
\begin{eqnarray*}
   E\left[\left(\int_{0}^{n}\varphi_{i}(t)dB_{t}^{H}\right)^{2}\right]
   &=&H(2H-1)\int_{0}^{n}\int_{0}^{n}\varphi_{i}(u)\varphi_{i}(v)|u-v|^{2H-2}dudv \\
   &\leq& H(2H-1)C^{2}\int_{0}^{n}\int_{0}^{n}|u-v|^{2H-2}dudv=C^{2}n^{2H}.
\end{eqnarray*}
Then
\begin{eqnarray*}
 \left \| \frac{1}{n}\int_{0}^{n}\varphi_{i}(t)dB_{t}^{H}\right \|_{L^{2}(\Omega)} &\leq&
 Cn^{H-1}.
\end{eqnarray*}
Combining this with the fact that
$\int_{0}^{n}\varphi_{i}(t)dB_{t}^{H}$ is Gaussian and  Lemma
\ref{lemma:principal lemma} in the Appendix, we obtain for every $i=1,\dots,p$
\begin{eqnarray*}
  \frac{1}{n}\int_{0}^{n}\varphi_{i}(t)dB_{t}^{H} &\rightarrow& 0
\end{eqnarray*}  almost  surely  as $n\rightarrow\infty$.\\
Let us now compute the limit for the last component of
$\frac{1}{n}R_n$. Using the link between the divergence integral and
the path-wise integral we have
\begin{eqnarray}
  \int_{0}^{n}X_{t}\delta B_{t}^{H}
  &=& \int_{0}^{n}X_{t} dB_{t}^{H}- H(2H-1)
  \int_{0}^{n}\int_{0}^{t}D_{s}X_{t}(t-s)^{2H-2}dsdt.\label{link young-skorohod}
\end{eqnarray}
By (\ref{explicit exp. of FOU PF}) and (\ref{FOU}) we can write
\begin{eqnarray*}
  \frac{1}{n}\int_{0}^{n}X_{t} dB_{t}^{H} &=&
    \frac{1}{n}\int_{0}^{n}(h(t)+Z_{t})(dZ_{t}+\alpha Z_{t}dt)\\
   &=& \frac{1}{n}\int_{0}^{n}h(t)dZ_{t}+\frac{\alpha}{n}\int_{0}^{n}h(t)Z_{t}dt
   +\frac{\alpha}{n}\int_{0}^{n}Z^{2}_{t}dt+\frac{1}{n}\int_{0}^{n}Z_{t}dZ_{t}\\
   &=& \frac{1}{n}\int_{0}^{n}h(t)dZ_{t}+\frac{\alpha}{n}\int_{0}^{n}h(t)Z_{t}dt+\frac{\alpha}{n}
   \int_{0}^{n}Z^{2}_{t}dt+\frac{1}{n}\int_{0}^{n}Z_{t} dZ_{t}\\
   &=&\frac{Z_{n}h(n)}{n}-\frac{1}{n}\int_{0}^{n}h'(t)Z_{t}dt+\frac{\alpha}{n}\int_{0}^{n}h(t)Z_{t}dt+\frac{\alpha}{n}
   \int_{0}^{n}Z^{2}_{t}dt+\frac{Z_{n}^2}{2n}.
\end{eqnarray*}
Furthermore
\begin{eqnarray}
  h(t) &=& \tilde{h}(t)-e^{-\alpha t}\tilde{h}(0)\label{decomp. h}
\end{eqnarray}
and
\begin{eqnarray}
  Z(t) &=& \tilde{Z}(t)-e^{-\alpha t}\tilde{Z}(0)\label{decomp. Z}
\end{eqnarray}
where
\begin{eqnarray}
  \tilde{h}(t) &:=& e^{-\alpha t}\sum_{i=1}^{p}\mu_{i}\int_{-\infty}^{t}e^{\alpha
  s}\varphi_{i}(s)ds \label{tilde(h)}
\end{eqnarray}
which is  periodic with period 1, and
\begin{eqnarray*}
  \tilde{Z}_{t}&:=& e^{-\alpha t}\int_{-\infty}^{t}e^{\alpha
  s}dB_{s}^{H}
\end{eqnarray*}
which    is a stationary and ergodic process (see \cite{CKM}). Then
the ergodic theorem   implies that,   almost  surely
\begin{eqnarray*}
 \lim_{n\rightarrow \infty} \frac{1}{n}\int_{0}^{n}h'(t)Z_{t}dt
 =\lim_{n\rightarrow \infty}
 \frac{Z_{n}h(n)}{n}
 =\lim_{n\rightarrow \infty}\frac{\alpha}{n}\int_{0}^{n}h(t)Z_{t}dt=\lim_{n\rightarrow
 \infty}\frac{Z_{n}^2}{2n}=0;
\end{eqnarray*}
\begin{eqnarray*}
\lim_{n\rightarrow\infty}\frac{\alpha}{n}\int_{0}^{n}Z^{2}_{t}dt=\alpha^{1-2H}H\Gamma(2H).
\end{eqnarray*}
Thus,   almost  surely
\begin{eqnarray*}
  \lim_{n\rightarrow \infty}\frac{1}{n}\int_{0}^{n}X_{t}
  dB_{t}^{H}=\alpha^{1-2H}H\Gamma(2H).
\end{eqnarray*}
Combining this with (\ref{link young-skorohod}) and
\begin{eqnarray*}
\lim_{n\rightarrow\infty}
\frac{H(2H-1)}{n}\int_{0}^{n}\int_{0}^{t}D_{s}X_{t}(t-s)^{2H-2}dsdt
 &=&\alpha^{1-2H}H\Gamma(2H)
\end{eqnarray*}
 we deduce that,  almost  surely
 \begin{eqnarray*}
  \lim_{n\rightarrow \infty}\frac{1}{n}\int_{0}^{n}X_{t}
  \delta B_{t}^{H}=0
\end{eqnarray*}
which completes the proof.
\end{proof}
\\

Let us now discuss   the asymptotic normality of the LSE
$\widehat{\theta}_{n}$ of $\theta$.
\begin{theorem}\label{asymp. norm. FOU with PF} Assume that $1/2<H<3/4$. Then
\begin{eqnarray}
  n^{1-H}(\widehat{\theta}_{n}-\theta) &\overset{law}{\longrightarrow}&
  \mathcal{N}(0,M^{\top}\Sigma M) \label{cv in law of theta FOU}
\end{eqnarray}
where the matrix $M$ is defined in Proposition \ref{cv a.s.
nQ{-1}_{n}}, and
\begin{eqnarray*}
  \Sigma&:=& \left(
                  \begin{array}{cc}
                    G & -a \\
                    -a^{\top} & b \\
                  \end{array}
                \right)
\end{eqnarray*}
with
\begin{eqnarray*}
  a^{\top} := \left(H(2H-1)\int_{0}^{1}\int_{0}^{1}\varphi_{i}(u)\tilde{h}(v)|v-u|^{2H-2}dudv
              \right)_{1\leq i\leq p};
\end{eqnarray*}
\begin{eqnarray*}
G := \left(\int_{0}^{1}\int_{0}^{1}\varphi_{i}(u)\varphi_{j}(v)dudv
              \right)_{1\leq i,j\leq p};
  \quad b:=H(2H-1)\int_{0}^{1}\int_{0}^{1}\tilde{h}(u)\tilde{h}(v)|v-u|^{2H-2}dudv.
\end{eqnarray*}
\end{theorem}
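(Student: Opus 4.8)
Start from the decomposition
\[
n^{1-H}\big(\widehat\theta_n-\theta\big)
  = \big(nQ_n^{-1}\big)\Big(n^{-H}R_n\Big),
\]
which follows from (\ref{decomp. of LSE fOU PF}).  By Proposition \ref{cv a.s. nQ{-1}_{n}} we already know $nQ_n^{-1}\to M$ almost surely, so by Slutsky it suffices to prove that $n^{-H}R_n$ converges in law to a centred Gaussian vector with covariance $\Sigma$.  Recall
\[
R_n=\left(\int_0^n\varphi_1\,dB^H,\dots,\int_0^n\varphi_p\,dB^H,\;-\int_0^n X_t\,\delta B_t^H\right)^{\!\top}.
\]
The first $p$ components are linear functionals of $B^H$, hence already Gaussian; only the last one requires a genuine CLT.

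\textbf{Step 1: reduce the last component.}  Use the Young–Skorohod link (\ref{link young-skorohod}) and the decompositions (\ref{explicit exp. of FOU PF}), (\ref{decomp. h}), (\ref{decomp. Z}).  Writing $X_t=h(t)+Z_t=\tilde h(t)+\tilde Z_t+(\text{exponentially small terms})$, the pathwise integral $\int_0^n X_t\,dB^H_t$ splits into $\int_0^n \tilde h(t)\,dB^H_t$, $\int_0^n \tilde Z_t\,dB^H_t$, and remainders that, after the deterministic trace correction $H(2H-1)\int_0^n\int_0^t D_sX_t(t-s)^{2H-2}\,ds\,dt$ is subtracted, combine into a negligible contribution once divided by $n^H$.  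This is exactly the step in \cite[Prop.~5.1]{DFW} that the present paper flags as flawed, so here one must be careful: the $\tilde Z_t$ part must be written genuinely as a double Skorohod integral $I_2$ (a second Wiener–Itô chaos element), not casually split, and the cross terms involving $e^{-\alpha t}\tilde h(0)$, $e^{-\alpha t}\tilde Z(0)$ and the boundary terms $Z_n h(n)/n$, $Z_n^2/(2n)$ must be shown to vanish in $L^2$ after normalisation by $n^{1-H}$ (not merely $n^{-1}$, which is what the a.s.\ argument in Proposition \ref{cv a.s. frac{1}{n}R_{n}} used).

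\textbf{Step 2: identify the limiting covariance.}  After Step 1, $n^{-H}R_n$ is asymptotically equal in law to $n^{-H}$ times a vector whose first $p$ entries are $\int_0^n\varphi_i\,dB^H$ and whose last entry is $-\int_0^n\tilde h(t)\,dB^H_t$ minus an $I_2$ term.  Because $\tilde h$ is periodic with period $1$ and the $\varphi_i$ are periodic and orthonormal, a direct computation of second moments using the fBm covariance $H(2H-1)|u-v|^{2H-2}$ gives, as $n\to\infty$,
\[
n^{-2H}\,\mathrm{Cov}\!\left(\int_0^n\varphi_i\,dB^H,\int_0^n\varphi_j\,dB^H\right)\longrightarrow G_{ij},
\]
and similarly the $a$ and $b$ entries emerge from $n^{-2H}$-scaled covariances of $\int_0^n\varphi_i\,dB^H$ with $\int_0^n\tilde h\,dB^H$ and of $\int_0^n\tilde h\,dB^H$ with itself.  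The $I_2$ chaos term, when renormalised by $n^{-H}$ with $H<3/4$, converges to $0$ in $L^2$ (its variance is $O(n^{4H-2})=o(n^{2H})$ precisely because $H<3/4$), so it does not contribute to $\Sigma$; this is where the restriction $H<3/4$ is used.

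\textbf{Step 3: the CLT itself and conclusion.}  Since after the reductions all surviving terms lie in the \emph{first} Wiener chaos (they are linear functionals of $B^H$ against deterministic integrands $\varphi_i$ and $\tilde h$), the vector $n^{-H}R_n$ is, up to an $o(1)$ term, an honest Gaussian vector for each $n$; its covariance converges to $\Sigma$ by Step 2, hence it converges in law to $\mathcal N(0,\Sigma)$ — no Fourth Moment Theorem is even needed, only convergence of covariances for Gaussian families.  (Alternatively one may invoke \cite{NP} to handle the chaos decomposition cleanly.)  Combining with $nQ_n^{-1}\to M$ a.s.\ and Slutsky's theorem yields $n^{1-H}(\widehat\theta_n-\theta)\overset{law}{\longrightarrow}\mathcal N(0,M^{\top}\Sigma M)$, which is (\ref{cv in law of theta FOU}).

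\textbf{Main obstacle.}  The delicate point is Step 1: controlling the pathwise integral $\int_0^n X_t\,dB^H_t$ minus its trace term at the scale $n^{1-H}$ rather than $n^{-1}$.  The naive manipulation used in \cite{DFW} does not survive this finer normalisation, so one must keep the $\tilde Z$-contribution as a genuine double divergence integral, bound its variance as $O(n^{4H-2})$, and show every boundary/transient term is $O(n^{\,\beta})$ with $\beta<H$ in $L^2$.  Once that accounting is done, everything else is a covariance computation exploiting periodicity of $\varphi_i$ and $\tilde h$.
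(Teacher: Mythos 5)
Your overall architecture is the same as the paper's: write $n^{1-H}(\widehat\theta_n-\theta)=(nQ_n^{-1})(n^{-H}R_n)$, use the almost sure convergence $nQ_n^{-1}\to M$ and Slutsky, kill the second-chaos contribution of $-\int_0^n X_t\,\delta B_t^H$ in $L^2$, use orthogonality of the first and second chaoses for the cross terms, and conclude by convergence of covariances for a Gaussian vector built from periodic integrands. However, the step that actually carries the content of the theorem is missing. The heart of the paper's proof is the explicit computation that, for functions $f_k$ periodic with period $1$,
\begin{equation*}
n^{-2H}E\Big[\int_0^n f_k(t)\,dB_t^H\int_0^n f_l(t)\,dB_t^H\Big]\longrightarrow \int_0^1\!\!\int_0^1 f_k(x)f_l(y)\,dx\,dy ,
\end{equation*}
obtained by splitting $[0,n]$ into unit blocks and proving $n^{-2H}\sum_{i<j\leq n}|j-i+y-x|^{2H-2}\to \tfrac{1}{2H(2H-1)}$, while the single-block (weighted) contribution is of order $n^{1-2H}\to 0$. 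You replace this by ``a direct computation of second moments gives'', and what you assert is internally inconsistent: you state the kernel-free limit $G_{ij}=\int\!\!\int\varphi_i\varphi_j\,du\,dv$ for the first block of $\Sigma$, but claim the entries $a$ and $b$, which carry the weight $H(2H-1)|v-u|^{2H-2}$ on $[0,1]^2$, ``emerge similarly''. All entries of $\Sigma$ arise from the same scaled bilinear form, so they must all be of the same form; the weighted single-period integral is precisely the term annihilated by the $n^{-2H}$ normalisation. (The paper's statement of $a,b$ sits in the same tension with its own proof, but the proof at least performs the computation and produces a definite limit.) Without this block computation the identification of $\Sigma$ is not proved.

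A second concrete error is your treatment of the second-chaos term. For $1/2<H<3/4$ the variance of $\int_0^n Z_t\,\delta B_t^H$ is of order $n$ --- this is the result of \cite{HN} the paper invokes, and it is exactly where the restriction $H<3/4$ enters --- not $O(n^{4H-2})$, which is the order in the regime $H>3/4$. Your parenthetical ``$O(n^{4H-2})=o(n^{2H})$ precisely because $H<3/4$'' is off on both counts: $n^{4H-2}=o(n^{2H})$ holds for every $H<1$, and the correct reason the term vanishes after dividing by $n^H$ is $n\cdot n^{-2H}=n^{1-2H}\to 0$ because $H>1/2$. The conclusion of that step survives with the correct bound, but the justification and the claimed role of $H<3/4$ are wrong as written. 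Finally, your Step 1 detour through the pathwise integral, the trace correction of (\ref{link young-skorohod}) and the boundary terms $Z_nh(n)$, $Z_n^2/2$ is unnecessary and left unestimated at the scale $n^{H}$: since $h$ is deterministic, one has directly $\int_0^n X_t\,\delta B_t^H=\int_0^n h(t)\,dB_t^H+\int_0^n Z_t\,\delta B_t^H$, which is how the paper proceeds and avoids those terms altogether.
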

\begin{proof}From (\ref{decomp. of LSE fOU PF}) we have
\begin{eqnarray*}
n^{1-H}\left(\widehat{\theta}_{n}-\theta\right)&=&
\left(nQ_{n}^{-1}\right)\left(n^{-H}R_{n}\right).
\end{eqnarray*}
From Proposition \ref{cv a.s. nQ{-1}_{n}} we have
$nQ_{n}^{-1}\rightarrow M$ almost surely. Then, to prove (\ref{cv in
law of theta FOU}) it is sufficient to show that, as
$n\rightarrow\infty$
\begin{eqnarray*}
 n^{-H}R_{n}=\left(n^{-H}\int_{0}^{n}\varphi_{1}(t)dB^{H}_{t},\ldots,n^{-H}\int_{0}^{n}\varphi_{p}(t)dB^{H}_{t},
 -n^{-H}\int_{0}^{n}X_{t}\delta B^{H}_{t}\right) &\overset{law}{\longrightarrow}&
 \mathcal{N}(0,\Sigma).
\end{eqnarray*}
According to (\ref{explicit exp. of FOU PF})
\[n^{-H}\int_{0}^{n}X_{t}\delta
B^{H}_{t}=n^{-H}\int_{0}^{n}Z_{t}\delta
B^{H}_{t}+n^{-H}\int_{0}^{n}h(t)dB^{H}_{t}.\] Moreover, it follows
from \cite{HN}  that if $1/2<H<3/4$,
$n^{-1}E\left[\left(\int_{0}^{n}Z_{t}dB^{H}_{t}\right)^2\right]$
  converges to a positive constant as $n\rightarrow\infty$. This
  implies that, as $n\rightarrow\infty$
 \begin{eqnarray*}
E\left[\left(n^{-H}\int_{0}^{n}Z_{t}dB^{H}_{t}\right)^2\right]
\longrightarrow 0.
\end{eqnarray*}
It is also clear that for every  $  1\leq i\leq p$
\begin{eqnarray*}
  E\left[\left(\int_{0}^{n}\varphi_{i}(t)dB^{H}_{t}\right)\left(\int_{0}^{n}Z_t\delta B^{H}_{t}\right)\right]  &=&
  0.
\end{eqnarray*}
Indeed, this follows from the fact that the first integral can be viewed as an element in the first Wiener chaos and the second integral as an element in the second Wiener chaos.
Hence  it remains to check
\begin{eqnarray*}
\left(n^{-H}\int_{0}^{n}\varphi_{1}(t)dB^{H}_{t},\ldots,n^{-H}\int_{0}^{n}\varphi_{p}(t)dB^{H}_{t},
-n^{-H}\int_{0}^{n}h(t)dB^{H}_{t}\right)
\overset{law}{\longrightarrow} \mathcal{N}(0,\Sigma).
\end{eqnarray*}
  By using (\ref{decomp. h}) and the fact that the functions
$\tilde{h},\varphi_{i}, i=1,\ldots, p$ are periodic  functions with
period 1 it   is enough to prove that if $f_{k}, k=1,\ldots, q$ are
periodic real valued functions with period 1, then for every $H>1/2$
we have, as $n\rightarrow\infty$
\begin{eqnarray*}
 \left(n^{-H}\int_{0}^{n}f_{k}(t)dB_{t}^{H}\right)_{1\leq k\leq q}&\overset{law}{\longrightarrow}&
 \mathcal{N}\left(0,\left(\int_{0}^{1}\int_{0}^{1}f_{k}(x)f_{l}(y)dxdy\right)_{1\leq k,l\leq
 q}\right).
\end{eqnarray*}
Because the left-hand side is a Gaussian
vector  it is sufficient to check the convergence of its covariance
matrix. Since the functions $f_{k}, k=1,\ldots, q$ are periodic with
period 1, we have for every $1\leq k,l\leq q,\ i\geq1$
\begin{eqnarray*}
  E\left[ \int_{i-1}^{i}f_{k}(t)dB^{H}_{t}\int_{i-1}^{i}f_{l}(t)dB^{H}_{t}\right]
  =H(2H-1)\int_{0}^{1}\int_{0}^{1}f_{k}(t)f_{l}(s)|t-s|^{2H-2}dsdt.
\end{eqnarray*}
Hence, for every $1\leq k,l\leq q$
\begin{eqnarray*}
E\left(\int_{0}^{n}f_{k}(s)dB^{H}_{s}\int_{0}^{n}f_{l}(t)dB^{H}_{t}\right)
&=&\sum_{i,j=1}^{n}E\left(\int_{i-1}^{i}f_{k}(s)dB^{H}_{s}\int_{j-1}^{j}f_{l}(t)dB^{H}_{t}\right)\\
&=&H(2H-1)\left[n\int_{0}^{1}\int_{0}^{1}f_{k}(x)f_{l}(y)|y-x|^{2H-2}dxdy
\right.\\&&\left.+\sum_{i\neq
j=1}^{n}\int_{0}^{1}\int_{0}^{1}f_{k}(x)f_{l}(y)|j-i+y-x|^{2H-2}dxdy\right].
\end{eqnarray*}
Furthermore, for every $x,y\in[0,1]$
\begin{eqnarray*}
\sum_{i<j=1}^{n}|j-i+y-x|^{2H-2}&=&\sum_{i<j=1}^{n}\left(j-i+y-x\right)^{2H-2}\\
 &=& \sum_{i=1}^{n-1}\sum_{j=i+1}^{n}\left(j-i+y-x\right)^{2H-2} \\
&=& \sum_{m=1}^{n-1}(n-m)\left(m+y-x\right)^{2H-2}.
\end{eqnarray*}
We have
$\left(m+y-x\right)^{2H-2}\underset{\infty}{\sim}m^{2H-2}$, and
$m\left(m+y-x\right)^{2H-2}\underset{\infty}{\sim}m^{2H-1}$. Hence, since $H>\frac12$, we get
\[n\sum_{m=1}^{n-1}
\left(m+y-x\right)^{2H-2}\underset{\infty}{\sim}\frac{n^{2H}}{2H-1};
\quad
\sum_{m=1}^{n-1}m\left(m+y-x\right)^{2H-2}\underset{\infty}{\sim}\frac{n^{2H}}{2H}.\]
This implies that, as $n\rightarrow\infty$
\begin{eqnarray*}
n^{-2H}\sum_{i<j=1}^{n}|j-i+y-x|^{2H-2}\longrightarrow\frac{1}{2H-1}-\frac{1}{2H}=\frac{1}{2H(2H-1)}.
\end{eqnarray*}
Similarly,
\begin{eqnarray*}
n^{-2H}\sum_{j>i=1}^{n}|j-i+y-x|^{2H-2}\longrightarrow\frac{1}{2H(2H-1)}.
\end{eqnarray*}
As a consequence, as $n\rightarrow\infty$
\begin{eqnarray*}
n^{-2H}E\left(\int_{0}^{n}f_{k}(s)dB^{H}_{s}\int_{0}^{n}f_{l}(t)dB^{H}_{t}\right)
\longrightarrow\int_{0}^{1}\int_{0}^{1}f_{k}(x)f_{l}(y)dxdy
\end{eqnarray*}
which implies the desired result.
\end{proof}
\begin{remark}
It seems challenging to obtain the limiting behaviour of our estimator in the case
$H\geq \frac34$, and the same phenomena is present even in the case of fOU-process
 without periodicities (see, e.g. \cite{HN, sot-vii}). On the other hand, this is in analogue
  with the quadratic variations of the fractional Brownian motion in which case the limit distribution
   is not normal in the case $H>\frac34$.
\end{remark}
\section{LSE for fOU  of second kind  with periodic mean}
From (\ref{eq: SDE second kind fOU}) and (\ref{LSE FOUSK PF}) we can
write
\begin{eqnarray}
\label{decomp. LSE FOUSK PF} \widetilde{\theta}_{n}=\theta+
\widetilde{Q}_{n}^{-1}\widetilde{R}_{n},
\end{eqnarray}
where
\begin{eqnarray*}
\label{eq:vector R second kind} \widetilde{R}_{n} &:=&
\left(\int_{0}^{n}\varphi_{1}(t)dY_{t}^{(1)},\ldots.,\int_{0}^{n}\varphi_{p}(t)dY_{t}^{(1)},-\int_{0}^{n}X^{(1)}_{t}\delta
Y_{t}^{(1)}\right)^{\top},
\end{eqnarray*}
and
\begin{eqnarray}
\label{eq:matrice Q FOUSK}
  \widetilde{Q}_{n}^{-1} &=&\frac{1}{n}\left(
          \begin{array}{cc}
            I_{p}+ \eta_{n} \Gamma_{n} \Gamma_{n}^{\top} & -\eta_{n} \Gamma_{n} \\
            -\eta_{n} \Gamma_{n}^{\top} & \eta_{n} \\
          \end{array}
        \right) \nonumber
\end{eqnarray}
with
\begin{eqnarray*}
  \Gamma_{n} &=& (\Gamma_{n,1},\ldots,\Gamma_{n,p})^{\top}:=\left(\frac{1}{n}\int_{0}^{n}\varphi_{1}
  (t)X^{(1)}_{t}dt,\ldots,\frac{1}{n}\int_{0}^{n}\varphi_{p}(t)X^{(1)}_{t}dt\right)^{\top}\\
\end{eqnarray*}
and
\begin{eqnarray*}
   \eta_{n}&:=&\left(\frac{1}{n}\int_{0}^{n}(X^{(1)}_{t})^{2}dt-\sum_{i=1}^{p}\Gamma^{2}_{n,i}\right)^{-1}.
\end{eqnarray*}
\begin{theorem}\label{consitency FOUSK with PF}Assume that $1/2<H<1$. Then
\begin{eqnarray}\label{consistensy FOUSK}\widetilde{\theta}_{n}\longrightarrow\theta\end{eqnarray}
  almost surely   as $n\rightarrow \infty$.
\end{theorem}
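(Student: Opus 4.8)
The plan is to mirror the structure used for Theorem~1 (the fOU case): write $\widetilde{\theta}_{n}=\theta+\big(n\widetilde{Q}_{n}^{-1}\big)\big(\tfrac{1}{n}\widetilde{R}_{n}\big)$ and prove separately that $\tfrac{1}{n}\widetilde{R}_{n}\to 0$ almost surely and that $n\widetilde{Q}_{n}^{-1}$ converges almost surely to a finite deterministic matrix $\widetilde{M}$. The product of these two limits is then $0$, which gives the claim. Both pieces should be isolated as propositions (paralleling Propositions~\ref{cv a.s. frac{1}{n}R_{n}} and \ref{cv a.s. nQ{-1}_{n}}), and I would state them just before or after the theorem and invoke them here.

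For the convergence $n\widetilde{Q}_{n}^{-1}\to\widetilde{M}$, the key is to understand the ergodic behaviour of the driving process. One writes $X^{(1)}_{t}=\widetilde{h}^{(1)}(t)+\widetilde{Z}^{(1)}_{t}+(\text{exponentially decaying terms})$, where $\widetilde{Z}^{(1)}$ is the stationary fOU of the second kind, i.e.\ the stationary solution of $dZ^{(1)}_{t}=-\alpha Z^{(1)}_{t}dt+dY^{(1)}_{t}$, which is known to be ergodic (see \cite{KLR, EV}); $\widetilde{h}^{(1)}$ plays the role of a periodic mean. Then the ergodic theorem, together with the orthonormality $\int_{0}^{1}\varphi_{i}\varphi_{j}\,dt=\delta_{ij}$ and periodicity of the $\varphi_{i}$, yields $\tfrac{1}{n}\int_{0}^{n}\varphi_{i}(t)X^{(1)}_{t}dt\to c_{i}$ and $\tfrac{1}{n}\int_{0}^{n}(X^{(1)}_{t})^{2}dt\to \esp[(\widetilde{Z}^{(1)}_{0})^{2}]+\int_{0}^{1}\widetilde{h}^{(1)}(t)^{2}dt$ almost surely, so that $\Gamma_{n}\to\Gamma$ and $\eta_{n}\to\eta$ for explicit finite limits (with $\eta^{-1}$ strictly positive, hence $\eta$ finite, by the Cauchy--Schwarz-type strict inequality unless $X^{(1)}$ is a.s.\ a linear combination of the $\varphi_{i}$, which it is not). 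The explicit block form of $\widetilde{Q}_{n}^{-1}$ then passes to the limit entrywise and defines $\widetilde{M}$.

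For $\tfrac{1}{n}\widetilde{R}_{n}\to 0$ almost surely, the first $p$ components $\tfrac{1}{n}\int_{0}^{n}\varphi_{i}(t)dY^{(1)}_{t}$ are handled exactly as in the proof of Proposition~\ref{cv a.s. frac{1}{n}R_{n}}: estimate the $L^{2}(\Omega)$-norm using the covariance structure of $Y^{(1)}$, obtain a bound of order $n^{\beta-1}$ with $\beta<1$, and conclude via Gaussianity and Lemma~\ref{lemma:principal lemma}. For the last component $-\tfrac{1}{n}\int_{0}^{n}X^{(1)}_{t}\delta Y^{(1)}_{t}$, convert the Skorokhod integral to a pathwise (Young) integral plus a Malliavin-derivative trace term, then use the semimartingale-type decomposition $dX^{(1)}_{t}=(\sum\mu_{i}\varphi_{i}(t)-\alpha X^{(1)}_{t})dt+dY^{(1)}_{t}$ to write $\tfrac{1}{n}\int_{0}^{n}X^{(1)}_{t}dY^{(1)}_{t}$ in terms of $X^{(1)}_{n}$, $\tfrac{1}{n}\int_{0}^{n}(X^{(1)}_{t})^{2}dt$, and $\tfrac{1}{n}\int_{0}^{n}\varphi_{i}(t)X^{(1)}_{t}dt$; the ergodic limits from the previous paragraph make the pathwise integral converge, and it should cancel exactly against the limit of the trace term (which equals $\esp[(\widetilde{Z}^{(1)}_{0})^{2}]\cdot$ something, or more precisely the same constant), leaving $0$.

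The main obstacle is the last component: one needs the precise constant in $\lim_{n}\tfrac{1}{n}\int_{0}^{n}(\widetilde Z^{(1)}_{t})^{2}dt$ and the precise limit of the Malliavin trace term $\tfrac{1}{n}\int_{0}^{n}\int_{0}^{t}D^{Y^{(1)}}_{s}X^{(1)}_{t}\,\rho(t,s)\,ds\,dt$ to agree, so that the $\delta$-integral genuinely vanishes rather than converging to a nonzero constant. This requires either known formulas for the stationary fOU of the second kind (e.g.\ from \cite{KLR, EV}) or a direct computation of its covariance function; I would quote these from the cited literature and verify the cancellation. Everything else is a routine transcription of the $H\in(\tfrac12,1)$ arguments already carried out for the ordinary fOU, replacing $B^{H}$ by $Y^{(1)}$ and the fOU $Z$ by the fOU of the second kind $Z^{(1)}$.
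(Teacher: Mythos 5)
Your overall architecture is the paper's: the decomposition $\widetilde{\theta}_{n}=\theta+\bigl(n\widetilde{Q}_{n}^{-1}\bigr)\bigl(\tfrac1n\widetilde{R}_{n}\bigr)$, the almost sure convergence $n\widetilde{Q}_{n}^{-1}\to\overline{M}$ obtained from the stationary, ergodic fOU of the second kind (Proposition \ref{pro:matrice overlineM second kind}, which uses ergodicity from \cite{KS} and Bessel's inequality where you invoke a Cauchy--Schwarz argument -- same effect), and the first $p$ components of $\widetilde{R}_{n}$ treated through the covariance formula of Proposition \ref{isometry with respect Y^1}, the bound $E[(Y^{(1)}_{n})^{2}]=O(n)$ and Lemma \ref{lemma:principal lemma}, exactly as in Proposition \ref{pro:convergence vector R second kind}. (Minor point: the reference you call KLR is not among the paper's citations; the relevant sources here are \cite{KS}, \cite{CKM}, \cite{AM}, \cite{EET}.)

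The genuine divergence, and the weak point, is the last component $\tfrac1n\int_{0}^{n}X^{(1)}_{t}\delta Y^{(1)}_{t}$. You propose to replicate the first-kind argument: pass from the Skorokhod to the pathwise integral, compute the pathwise limit by ergodicity, and verify that it cancels exactly against the Malliavin trace term. The paper does none of this. After splitting off $\int_{0}^{n}h(t)dY^{(1)}_{t}$ (handled like the $\varphi_{i}$ terms), it identifies $\int_{0}^{n}Z^{(1)}_{t}\delta Y^{(1)}_{t}$ in law with $\int_{0}^{n}\tilde{Z}^{(1)}_{t}\delta\tilde{G}_{t}$ via \cite[Lemma 3.1]{AM} (Lemma \ref{lemma:equality in law}), quotes \cite[Theorem 3.2]{AM} for $E\bigl[\bigl(\tfrac{1}{\sqrt{n}}\int_{0}^{n}\tilde{Z}^{(1)}_{t}\delta\tilde{G}_{t}\bigr)^{2}\bigr]\to\lambda(\theta,H)$, and then, since this is a second-chaos element, upgrades the resulting $O(n^{-1/2})$ bound in $L^{2}(\Omega)$ to almost sure convergence through hypercontractivity (\ref{hypercontractivity}) and Lemma \ref{lemma:principal lemma}; no cancellation computation is needed. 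Your route is viable in principle (the trace term is deterministic because $Z^{(1)}$ lives in the first chaos, and the pathwise terms converge a.s.\ by ergodicity), but the step you defer is precisely the hard part: one must first justify a correction formula of the type of Proposition \ref{pro:path-wise and Skorokhod} for $Y^{(1)}$-integrals, which requires a regular Volterra-kernel representation (this is exactly what Lemma \ref{lemma:equality in law} provides, through $\tilde{G}$ with kernel $K_{H}+\tilde{L}$), and then compute the trace-term limit and match it with $\alpha E[(\bar{Z}^{(1)}_{0})^{2}]=(2H-1)H^{2H}\beta((\alpha-1)H+1,2H-1)$. Neither is an off-the-shelf quotation, so as written the crux of the last component remains unproved, whereas the paper's variance-plus-hypercontractivity argument closes it without any such computation.
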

\begin{proof}
By (\ref{decomp. LSE FOUSK PF}) we have
$\widetilde{\theta}_{n}-\theta=\left(n\widetilde{Q}_{n}^{-1}\right)\left(\frac{1}{n}\widetilde{R}_{n}\right)$.
Thus the convergence (\ref{consistensy FOUSK}) is a direct
consequence of Propositions \ref{pro:convergence vector R second
kind} and \ref{pro:matrice overlineM second kind} below.
\end{proof}
\begin{proposition}\label{pro:convergence vector R second kind} Assume that $\frac{1}{2}<H<1$.
 Then the sequence $\frac{1}{n}\widetilde{R}_{n}$ almost surely to $0$ as $n\rightarrow\infty$.
\end{proposition}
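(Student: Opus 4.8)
The plan is to mirror the proof of Proposition~\ref{cv a.s. frac{1}{n}R_{n}}, with $B^H$ replaced by $Y^{(1)}$. I would first set $Z^{(1)}_t:=e^{-\alpha t}\int_0^t e^{\alpha s}dY^{(1)}_s$, which solves $dZ^{(1)}_t=-\alpha Z^{(1)}_t dt+dY^{(1)}_t$ with $Z^{(1)}_0=0$, and $h^{(1)}(t):=e^{-\alpha t}\sum_{i=1}^p\mu_i\int_0^t e^{\alpha s}\varphi_i(s)ds$, which is bounded, so that $X^{(1)}_t=h^{(1)}(t)+Z^{(1)}_t$. As in the first-kind case one writes $h^{(1)}(t)=\tilde h^{(1)}(t)-e^{-\alpha t}\tilde h^{(1)}(0)$ and $Z^{(1)}_t=\tilde Z^{(1)}_t-e^{-\alpha t}\tilde Z^{(1)}_0$, where $\tilde h^{(1)}(t):=e^{-\alpha t}\sum_i\mu_i\int_{-\infty}^t e^{\alpha s}\varphi_i(s)ds$ is periodic of period $1$ and $\tilde Z^{(1)}_t:=e^{-\alpha t}\int_{-\infty}^t e^{\alpha s}dY^{(1)}_s$ is the stationary and ergodic fractional Ornstein--Uhlenbeck process of the second kind; the $e^{-\alpha t}$-corrections are always negligible after dividing by $n$.

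For the first $p$ coordinates, each $\int_0^n\varphi_i(t)dY^{(1)}_t$ is centred Gaussian, so by Lemma~\ref{lemma:principal lemma} it suffices to bound its $L^2(\Omega)$-norm by $n^\beta$ with $\beta<1$. Denote by $\rho^{(1)}$ the covariance density of $Y^{(1)}$, i.e.\ $E[Y^{(1)}_sY^{(1)}_t]=\int_0^s\int_0^t\rho^{(1)}(u,v)\,du\,dv$. Starting from $dY^{(1)}_t=e^{-t}dB^H_{a_t}$, $a_t=He^{t/H}$, and the chain rule one finds $\rho^{(1)}(s,t)=H(2H-1)\,e^{(1/H-1)(s+t)}\,|a_s-a_t|^{2H-2}\geq0$ (since $H>\tfrac12$); hence $E\big[(\int_0^n\varphi_i(t)dY^{(1)}_t)^2\big]\leq C^2\int_0^n\int_0^n\rho^{(1)}(u,v)\,du\,dv=C^2\,E[(Y^{(1)}_n)^2]$, and $E[(Y^{(1)}_n)^2]$ grows linearly in $n$ (this estimate, which is also what makes the $\sqrt n$-rate possible in Theorem~\ref{asymp. norm. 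FOUSK with PF}, is established in the Appendix). Thus $\big\|\frac1n\int_0^n\varphi_i(t)dY^{(1)}_t\big\|_{L^2(\Omega)}\leq Cn^{-1/2}$, and Lemma~\ref{lemma:principal lemma} gives $\frac1n\int_0^n\varphi_i(t)dY^{(1)}_t\to0$ almost surely for every $i$.

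For the last coordinate I would split $\int_0^n X^{(1)}_t\,\delta Y^{(1)}_t=\int_0^n h^{(1)}(t)\,dY^{(1)}_t+\int_0^n Z^{(1)}_t\,\delta Y^{(1)}_t$, the deterministic part requiring no divergence correction. Since $h^{(1)}$ is bounded, the first term is centred Gaussian with variance at most $\|h^{(1)}\|_\infty^2\,E[(Y^{(1)}_n)^2]=O(n)$, so it tends to $0$ almost surely exactly as above. The second term belongs to the second Wiener chaos, is centred, and has $L^2(\Omega)$-norm of order $\sqrt n$ (again the estimate behind Theorem~\ref{asymp. norm. FOUSK with PF}, proved in the Appendix); by hypercontractivity its $L^p(\Omega)$-norms are of the same order, so $\big\|\frac1n\int_0^n Z^{(1)}_t\,\delta Y^{(1)}_t\big\|_{L^p(\Omega)}\leq C_p\,n^{-1/2}$, and a Borel--Cantelli argument (passing from integers to reals via continuity of $n\mapsto\int_0^n Z^{(1)}_t\,\delta Y^{(1)}_t$) yields $\frac1n\int_0^n Z^{(1)}_t\,\delta Y^{(1)}_t\to0$ almost surely. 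Alternatively, mirroring Proposition~\ref{cv a.s. frac{1}{n}R_{n}} more closely, one may use the link $\int_0^n Z^{(1)}_t\,\delta Y^{(1)}_t=\int_0^n Z^{(1)}_t\,dY^{(1)}_t-T_n$, evaluate $\frac1n\int_0^n Z^{(1)}_t\,dY^{(1)}_t\to\alpha\,E[(\tilde Z^{(1)}_0)^2]$ by substituting $dY^{(1)}_t=dZ^{(1)}_t+\alpha Z^{(1)}_t dt$, using $\int_0^n Z^{(1)}_t\,dZ^{(1)}_t=\tfrac12(Z^{(1)}_n)^2$, $\sup_{t\leq n}|Z^{(1)}_t|=O(\sqrt{\log n})$ a.s.\ and the ergodic theorem for $\tilde Z^{(1)}$, and then check that the trace term obeys $\frac1n T_n\to\alpha\,E[(\tilde Z^{(1)}_0)^2]$ as well. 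In either case the last coordinate tends to $0$ almost surely, and combining with the first $p$ coordinates gives $\frac1n\widetilde R_n\to0$ almost surely.

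The heart of the matter is that, unlike for fractional Brownian motion, the covariance density $\rho^{(1)}$ of $Y^{(1)}$ is not the explicit kernel $H(2H-1)|s-t|^{2H-2}$; the two inputs used above — the linear growth of $E[(Y^{(1)}_n)^2]$ and the linear growth of the variance of the second-chaos integral $\int_0^n Z^{(1)}_t\,\delta Y^{(1)}_t$ (equivalently, the identification of the limit of $\frac1n T_n$) — both rely on exploiting the explicit representation $Y^{(1)}_t=\int_0^t e^{-s}dB^H_{a_s}$, $a_t=He^{t/H}$, through an appropriate change of variables. These are the technical points that must be settled, and I would place them in the Appendix.
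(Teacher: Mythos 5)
Your proposal is correct and follows essentially the same route as the paper: the same bound $E[(\int_0^n\varphi_i\,dY^{(1)}_t)^2]\leq C^2E[(Y^{(1)}_n)^2]=O(n)$ for the first $p$ coordinates, the same split $\int_0^nX^{(1)}_t\,\delta Y^{(1)}_t=\int_0^nh\,dY^{(1)}+\int_0^nZ^{(1)}\,\delta Y^{(1)}$, and the same passage from $L^2$ (hence, by hypercontractivity, $L^p$) bounds of order $\sqrt{n}$ to almost sure convergence via Lemma~\ref{lemma:principal lemma}. The only difference is bookkeeping: the two variance estimates you defer to an appendix are exactly the ones the paper imports from the literature, namely $E[(Y^{(1)}_n)^2]=O(n)$ from \cite{CKM,EET} and the linear growth of $E[(\int_0^nZ^{(1)}_t\,\delta Y^{(1)}_t)^2]$ via the identity in law with $\int_0^n\tilde Z^{(1)}_t\,\delta\tilde G_t$ from \cite{AM}.
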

\begin{proof}Applying Proposition \ref{isometry with respect Y^1} and (\ref{varphi bounded}), we have for every $i=1,\ldots,p$
\begin{eqnarray*}
E\left[\left(\int_{0}^{n}\varphi_{i}(t)dY_{t}^{(1)}\right)^{2}\right]
   &\leq& C^2\int_{0}^{n}\int_{0}^{n}r_{H}(u,v)dudv\nonumber\\
   &=& C^2
   E[(Y_{n}^{(1)})^{2}].
\end{eqnarray*}
Thanks to \cite{CKM,EET},
\begin{eqnarray}
E\left[(Y_{n}^{(1)})^{2}\right] &=& O(n)\quad \mbox{ as }
n\rightarrow \infty. \label{variance Y^1}
\end{eqnarray}
 Thus
\begin{eqnarray*}
 \left\| \frac{1}{n}\int_{0}^{n}\varphi_{i}(t)dY_{t}^{(1)}\right
 \|_{L^{2}(\Omega)}&=& O(n^{-1/2}).
\end{eqnarray*}
Hence we can apply Lemma \ref{lemma:principal lemma} to obtain, as
$n\rightarrow \infty$
\begin{eqnarray}
  \frac{1}{n}\int_{0}^{n}\varphi_{i}(t)dY_{t}^{(1)} &\rightarrow&
  0\label{cv int varphi FOUSK}
\end{eqnarray}almost  surely for   every $i=1,\ldots,p$.\\
In order to compute the variance of the last component of
$\frac{1}{n}\widetilde{R}_{n}$, observe that we may write  the
solution of
 (\ref{eq: SDE second kind fOU}) as follows
\begin{eqnarray}
  X^{(1)}_{t} &=& h(t)+Z^{(1)}_{t}\label{decomp. of X(1)}
\end{eqnarray}
where the function $h$ is defined in (\ref{exp. of h and Z}), and
\begin{eqnarray}
  Z^{(1)}_{t} &:=& e^{-\alpha t}\int_{0}^{t}e^{\alpha
  s}dY_{s}^{(1)}.\label{Z(1)}
\end{eqnarray}
Hence
\begin{eqnarray}
  \int_{0}^{n}X^{(1)}_{t}\delta Y_{t}^{(1)} &=& \int_{0}^{n}h(t)dY_{t}^{(1)}+\int_{0}^{n}Z^{(1)}_{t}\delta Y_{t}^{(1)}.\nonumber
\end{eqnarray}
As in (\ref{cv int varphi FOUSK}),
\begin{eqnarray*}
  \frac{1}{n}\int_{0}^{n}h(t)dY_{t}^{(1)} &\rightarrow& 0
\end{eqnarray*}   almost  surely as $n\rightarrow\infty$.\\
From \cite[Lemma 3.1]{AM} we have
\begin{eqnarray}\label{identity law Z and G}
  \int_{0}^{n}Z^{(1)}_{t}\delta Y_{t}^{(1)} &\overset{law}{=}&\int_{0}^{n}\tilde{Z}^{(1)}_{t}\delta \tilde{G}_{t}
\end{eqnarray}
where the processes $\tilde{Z}^{(1)}$ and $\tilde{G}$ are well defined in (\ref{tildeZ(1)})
and (\ref{tildeG}) respectively.\\
Moreover, it  follows from \cite[Theorem 3.2]{AM} that there exists
a positive constant $\lambda(\theta,H)>0$ such that, as
$n\rightarrow\infty$
\begin{eqnarray}
 E\left[\left(\frac{1}{\sqrt{n}}\int_{0}^{n}\tilde{Z}^{(1)}_{t}\delta\tilde{G}_{t}\right)^2\right]  \longrightarrow
 \lambda(\theta,H).\label{cv variance FOUSK}
\end{eqnarray}
Combining (\ref{identity law Z and G}), (\ref{cv variance FOUSK}),
(\ref{hypercontractivity}) and Lemma \ref{lemma:principal lemma} we
  conclude that,  almost  surely
\begin{eqnarray*}
  \lim_{n\rightarrow\infty}\frac{1}{n}\int_{0}^{n}Z^{(1)}_{t}\delta Y_{t}^{(1)}
  &=&0
\end{eqnarray*}
which finishes the proof.
\end{proof}
\begin{proposition}\label{pro:matrice overlineM second kind} Assume that $\frac{1}{2}<H<1$.
 Then, as $n\rightarrow \infty$
\begin{eqnarray}
n\widetilde{Q}^{-1}_{n}\longrightarrow \overline{M} &:=&\left(
          \begin{array}{cc}
            I_{p}+ \eta \Gamma \Gamma^{t} & -\eta \Gamma \\
            -\eta \Gamma^{t} & \eta \\
          \end{array}
        \right),\label{cv of nQ^-1 FOUSK}
\end{eqnarray}almost surely,
where
\begin{eqnarray*}
  \Gamma &=& (\Gamma_{1},\ldots,\Gamma_{p})^{t}
  :=\left(\int_{0}^{1}\varphi_{1}(t)\tilde{h}(t)dt,\ldots,\int_{0}^{1}\varphi_{p}(t)\tilde{h}(t)dt\right)^{t};
  \\ \eta&:=&\left(\int_{0}^{1}\tilde{h}^{2}(t)dt
  +\frac{(2H-1)H^{2H}}{\alpha}\beta\left((\alpha-1)H+1,2H-1\right)-\sum_{i=1}^{p}\Lambda^{2}_{i}\right)^{-1},
\end{eqnarray*}
with $\tilde{h}$ is given in (\ref{tilde(h)}).
\end{proposition}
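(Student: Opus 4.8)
The plan is to reduce the statement to the almost sure convergence of the two basic quantities
\[
\Gamma_{n,i}\longrightarrow\Gamma_i\quad(1\leq i\leq p)\qquad\text{and}\qquad \eta_n\longrightarrow\eta ,
\]
since every entry of $n\widetilde{Q}_n^{-1}$ is a polynomial in the coordinates of $\Gamma_n$ and in $\eta_n$; once these two limits are available, \eqref{cv of nQ^-1 FOUSK} follows by continuity. Throughout, $n$ is an integer. The two structural inputs are the decomposition \eqref{decomp. of X(1)}, $X^{(1)}_t=h(t)+Z^{(1)}_t$, the decomposition \eqref{decomp. h}, $h(t)=\tilde h(t)-e^{-\alpha t}\tilde h(0)$ with $\tilde h$ periodic of period $1$, and the fact that the second-kind fOU $Z^{(1)}$ admits a stationary and ergodic version $\tilde Z^{(1)}$ (see \cite{CKM}) with $Z^{(1)}_t=\tilde Z^{(1)}_t-e^{-\alpha t}\tilde Z^{(1)}_0$.

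\textbf{Convergence of $\Gamma_n$.} Writing $\Gamma_{n,i}=\frac1n\int_0^n\varphi_i(t)h(t)\,dt+\frac1n\int_0^n\varphi_i(t)Z^{(1)}_t\,dt$, the deterministic part is handled by periodicity: since $\varphi_i\tilde h$ has period $1$, $\frac1n\int_0^n\varphi_i(t)\tilde h(t)\,dt=\int_0^1\varphi_i(t)\tilde h(t)\,dt=\Gamma_i$ for every $n$, while $\frac1n\int_0^n\varphi_i(t)e^{-\alpha t}\tilde h(0)\,dt=O(1/n)$ because $\varphi_i$ is bounded. For the random part I would proceed as in the proof of Proposition \ref{pro:convergence vector R second kind}: $\frac1n\int_0^n\varphi_i(t)\tilde Z^{(1)}_t\,dt=\frac1n\sum_{k=0}^{n-1}\xi_k$ with $\xi_k:=\int_0^1\varphi_i(u)\tilde Z^{(1)}_{k+u}\,du$ a stationary ergodic (and centered, $L^2$) sequence, so Birkhoff's ergodic theorem gives the limit $E[\xi_0]=\int_0^1\varphi_i(u)E[\tilde Z^{(1)}_u]\,du=0$ almost surely, and the correction $\frac{\tilde Z^{(1)}_0}{n}\int_0^n\varphi_i(t)e^{-\alpha t}\,dt\to0$ a.s. trivially. (Equivalently, one can bound $\|\frac1n\int_0^n\varphi_i(t)Z^{(1)}_t\,dt\|_{L^2(\Omega)}=O(n^{-\delta})$ for some $\delta>0$ using $|\varphi_i|\le C$ and the decay of the covariance of $\tilde Z^{(1)}$ (see \cite{CKM,AM,EET}), and then invoke hypercontractivity \eqref{hypercontractivity} and Lemma \ref{lemma:principal lemma}, since $\frac1n\int_0^n\varphi_i(t)Z^{(1)}_t\,dt$ is a centered Gaussian variable.) Hence $\Gamma_{n,i}\to\Gamma_i$ almost surely.

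\textbf{Convergence of $\eta_n$.} Since $\eta_n=\big(\frac1n\int_0^n(X^{(1)}_t)^2dt-\sum_{i=1}^p\Gamma_{n,i}^2\big)^{-1}$ and $\sum_i\Gamma_{n,i}^2\to\sum_i\Gamma_i^2$ a.s.\ by the previous step, it remains to compute the limit of $\frac1n\int_0^n(X^{(1)}_t)^2dt$. Expanding $(X^{(1)}_t)^2=h^2(t)+2h(t)Z^{(1)}_t+(Z^{(1)}_t)^2$: by periodicity (and since the $e^{-\alpha t},e^{-2\alpha t}$ contributions in \eqref{decomp. h} are $O(1/n)$), $\frac1n\int_0^n h^2(t)\,dt\to\int_0^1\tilde h^2(t)\,dt$; the cross term $\frac1n\int_0^n h(t)Z^{(1)}_t\,dt\to0$ a.s.\ by the same Birkhoff/$L^2$ argument as above ($h$ is bounded); and for the quadratic term, using $Z^{(1)}_t=\tilde Z^{(1)}_t-e^{-\alpha t}\tilde Z^{(1)}_0$ together with stationarity and ergodicity of $\tilde Z^{(1)}$, Birkhoff's theorem yields
\[
\frac1n\int_0^n(Z^{(1)}_t)^2dt\ \longrightarrow\ E\big[(\tilde Z^{(1)}_0)^2\big]=\frac{(2H-1)H^{2H}}{\alpha}\,\beta\big((\alpha-1)H+1,\,2H-1\big)\qquad\text{a.s.},
\]
the value of the stationary variance being the one computed in \cite{CKM} (see also \cite{AM,EET}). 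Combining these, the denominator of $\eta_n$ converges a.s.\ to $\int_0^1\tilde h^2+\frac{(2H-1)H^{2H}}{\alpha}\beta((\alpha-1)H+1,2H-1)-\sum_i\Gamma_i^2$, which is strictly positive: by Bessel's inequality (the $\varphi_i$ are orthonormal in $L^2([0,1],dt)$) one has $\sum_i\Gamma_i^2=\sum_i\langle\varphi_i,\tilde h\rangle_{L^2([0,1])}^2\leq\int_0^1\tilde h^2(t)\,dt$, so the limit is at least $\frac{(2H-1)H^{2H}}{\alpha}\beta((\alpha-1)H+1,2H-1)>0$ (all three factors being positive for $\frac12<H<1$, $\alpha>0$, since then $(\alpha-1)H+1>0$). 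Continuity of $x\mapsto x^{-1}$ off $0$ then gives $\eta_n\to\eta$ a.s., and letting $n\to\infty$ entrywise in $n\widetilde{Q}_n^{-1}$ completes the proof.

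\textbf{Main obstacle.} The bookkeeping with the decomposition $X^{(1)}=h+Z^{(1)}$ and the periodicity of $\tilde h$ and of the $\varphi_i$ is routine. The two genuinely delicate points are: (i) the almost sure vanishing of the two terms linear in $Z^{(1)}$, which requires either a precise enough decay estimate for the covariance of the second-kind fOU (to get a polynomial $L^2$-rate usable in Lemma \ref{lemma:principal lemma}) or a careful reduction to Birkhoff's ergodic theorem for a stationary functional of $\tilde Z^{(1)}$; and (ii) identifying the exact constant $E[(\tilde Z^{(1)}_0)^2]=\frac{(2H-1)H^{2H}}{\alpha}\beta((\alpha-1)H+1,2H-1)$, which must be imported from \cite{CKM,AM,EET}. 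Everything else reduces to these.
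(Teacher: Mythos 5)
Your proposal is correct and follows essentially the same route as the paper: decompose $X^{(1)}=h+Z^{(1)}$, pass to the stationary ergodic version of the second-kind fOU (the paper's $\bar Z^{(1)}$, cited from \cite{KS} rather than \cite{CKM}), apply the ergodic theorem to get $\Gamma_{n,i}\to\Gamma_i$ and the limit of $\frac1n\int_0^n(X^{(1)}_t)^2dt$, and use Bessel's inequality to see that $\eta$ is well defined. The only cosmetic difference is that the paper computes $E[(\bar Z^{(1)}_0)^2]=\frac{(2H-1)H^{2H}}{\alpha}\beta((\alpha-1)H+1,2H-1)$ explicitly via the fBm covariance rather than importing the constant from the literature.
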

\begin{proof}
Define
$$\bar{X}^{(1)}_{t}=\tilde{h}(t)+\bar{Z_{t}}^{(1)}$$
where
\begin{eqnarray*}
\label{eq:relationship between Z_bar and Z}
  \bar{Z_{t}}^{(1)}&=&e^{-\alpha t}\int_{-\infty}^{t}e^{(\alpha-1)s}dB_{a_{s}}=Z_{t}^{(1)}+e^{-\alpha
  t}\bar{Z_{0}}^{(1)}.
\end{eqnarray*}
Since the process $\bar{Z}^{(1)}$ is   ergodic (see \cite{KS}),  as
$n\rightarrow\infty$
\begin{eqnarray}
  \frac{1}{n}\int_{0}^{n}(\bar {Z}_{t}^{(1)})^{2}dt &\rightarrow& E(\bar {Z}_{0}^{(1)})^{2} \nonumber
\end{eqnarray}
 almost surely. Hence
\begin{eqnarray}
\label{eq:ergodicity of Z1}
   \frac{1}{n}\int_{0}^{n}(Z^{(1)}_{t})^{2}dt &\rightarrow& E(\bar {Z_{0}}^{(1)})^{2}.
\end{eqnarray} almost surely as $n\rightarrow\infty$. Moreover
\begin{eqnarray*}
\label{eq:variance Z bar zero}
  E(\bar{Z}_{0}^{(1)})^{2} &=& H^{-2(\alpha-1)H}E\left(\int_{0}^{a_{0}}s^{(\alpha-1)H}dB_{s}\right)^{2} \nonumber \\
   &=& H^{-2(\alpha-1)H}H(2H-1)\int_{0}^{a_{0}}\int_{0}^{a_{0}}s^{(\alpha-1)H}t^{(\alpha-1)H}|s-t|^{2H-2}dsdt \nonumber \\
   &=& \frac{(2H-1)H^{2H}}{\alpha}\beta((\alpha-1)H+1,2H-1).
\end{eqnarray*}
Thus,
\begin{eqnarray}
  \lim_{n\rightarrow \infty}\frac{1}{n}\int_{0}^{n}(X^{(1)}_{t})^{2}dt &=& \lim_{n\rightarrow \infty}\frac{1}{n}\int_{0}^{n}(\bar{X}^{(1)}_{t})^{2}dt \nonumber\\
   &=& \int_{0}^{1}\tilde{h}(t)^{2}dt+ \frac{(2H-1)H^{2H}}{\alpha}\beta((\alpha-1)H+1,2H-1),\nonumber
\end{eqnarray}
On the other hand, we also have
\begin{eqnarray*}
  \lim_{n\rightarrow \infty} \Gamma_{n,i}&=&\lim_{n\rightarrow \infty}\frac{1}{n} \int_{0}^{n}X^{(1)}_{t}\varphi_{i}(t)dt=\lim_{n\rightarrow \infty}\frac{1}{n} \int_{0}^{n}\bar{X}^{(1)}_{t}\varphi_{i}(t)dt\nonumber\\
  &=&E\left[\int_{0}^{1}\bar{X}^{(1)}_{t}\varphi_{i}(t)dt\right]
  \\
  &=&\int_{0}^{1}\tilde{h}(t)\varphi_{i}(t)dt+\int_{0}^{1}E\left[\int_{-\infty}^{t}\varphi_{i}(t)e^{-\alpha(t-s)}dY^{(1)}_{s}\right]dt,\nonumber
   \\
  &=&\int_{0}^{1}\tilde{h}(t)\varphi_{i}(t)dt
\end{eqnarray*}
Furthermore,
\begin{eqnarray*}
  \lim_{n\rightarrow\infty}\eta_{n} &=& \lim_{n\rightarrow\infty} \left(\frac{1}{n}\int_{0}^{n}(X^{(1)}_{t})^{2}dt-\sum_{i=1}^{p}\Gamma^{2}_{n,i}\right)^{-1} \nonumber \\
   &=&\left(\int_{0}^{1}\tilde{h}(t)^{2}dt+ \frac{(2H-1)H^{2H}}{\alpha}B((\alpha-1)H+1,2H-1)-\sum_{i=1}^{p}\Gamma^{2}_{i}\right)^{-1} \nonumber
   \\
   &=&\eta.
\end{eqnarray*}
Using the fact that the functions $\varphi_{i};\ i=1,\ldots,p$ are
orthonormal in $L^{2}[0,1]$ and the Bessel inequality we get
\begin{eqnarray*}
  \sum_{i=1}^{p}\Gamma^{2}_{i} &=& \sum_{i=1}^{p}\left(\int_{0}^{1}\varphi_{i}(t)\tilde{h}(t)dt\right)^{2}\leq \int_{0}^{1}\tilde{h}^{2}(t)dt.
\end{eqnarray*}
This implies that the limit $\eta$  is well defined and finite, which completes the proof.
\end{proof}
\\

Let us now study the asymptotic normality of the LSE
$\widetilde{\theta}_{n}$ of $\theta$.
\begin{theorem}\label{asymp. norm. FOUSK with PF} Assume that $H\in(1/2,1)$.
Then, as $n\rightarrow\infty$
\begin{eqnarray}
  \sqrt{n}(\widetilde{\theta}_{n}-\theta) &\overset{law}{\longrightarrow}&
  \mathcal{N}(0,\overline{M}^{\top}\overline{\Sigma}\ \overline{M}) \label{cv in law of theta FOUSK}
\end{eqnarray}
where the matrix $\overline{M}$ is defined in Proposition
\ref{pro:matrice overlineM second kind}, and
\begin{eqnarray*}
  \overline{\Sigma}&:=& \left(
                  \begin{array}{cc}
                    \overline{G} & -\overline{a} \\
                    -\overline{a}^{\top} & \overline{b} \\
                  \end{array}
                \right)
\end{eqnarray*}
with
\begin{eqnarray*}
  \overline{a}^{\top} &:=& \left(\int_{0}^{1}\int_{0}^{1}\varphi_{i}(x)\tilde{h}(y)\sum_{m\in
\mathbb{Z}}^{\infty}r_H(x,y+m)dxdy\right)_{1\leq i\leq p};
\\
\overline{G }&:=&
\left(\int_{0}^{1}\int_{0}^{1}\varphi_{i}(x)\varphi_{j}(y)\sum_{m\in
\mathbb{Z}}^{\infty}r_H(x,y+m)dxdy\right)_{1\leq i,j\leq p};
  \\ \overline{b}&:=&\int_{0}^{1}\int_{0}^{1}\tilde{h}(x)\tilde{h}(y)\sum_{m\in
\mathbb{Z}}^{\infty}r_H(x,y+m)dxdy+\sigma^2.
\end{eqnarray*}
\end{theorem}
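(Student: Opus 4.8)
The plan is to follow the same scheme already used for Theorem \ref{asymp. norm. FOU with PF}, combining the almost sure convergence of $n\widetilde Q_n^{-1}$ with a central limit theorem for $n^{-1/2}\widetilde R_n$. From \eqref{decomp. LSE FOUSK PF} one has $\sqrt n(\widetilde\theta_n-\theta)=\big(n\widetilde Q_n^{-1}\big)\big(n^{-1/2}\widetilde R_n\big)$, and since $n\widetilde Q_n^{-1}\to\overline M$ almost surely by Proposition \ref{pro:matrice overlineM second kind}, Slutsky's theorem reduces the problem to showing $n^{-1/2}\widetilde R_n\overset{law}{\longrightarrow}\mathcal N(0,\overline\Sigma)$.

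Next I would split the last coordinate of $\widetilde R_n$. Using $X^{(1)}_t=h(t)+Z^{(1)}_t$ together with the decomposition \eqref{decomp. h}, $h=\tilde h-e^{-\alpha\cdot}\tilde h(0)$, one gets $-\int_0^nX^{(1)}_t\delta Y^{(1)}_t=-\int_0^n\tilde h(t)dY^{(1)}_t+\tilde h(0)\int_0^ne^{-\alpha t}dY^{(1)}_t-\int_0^nZ^{(1)}_t\delta Y^{(1)}_t$. An integration by parts together with \eqref{variance Y^1} shows that $\int_0^ne^{-\alpha t}dY^{(1)}_t$ has variance $O(1)$, so $n^{-1/2}$ times this term tends to $0$ in $L^2(\Omega)$ and can be discarded. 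I would then work with the $(p+2)$-dimensional vector
\[
F_n:=\Big(n^{-1/2}\!\int_0^n\!\varphi_1(t)dY^{(1)}_t,\ldots,n^{-1/2}\!\int_0^n\!\varphi_p(t)dY^{(1)}_t,\ -n^{-1/2}\!\int_0^n\!\tilde h(t)dY^{(1)}_t,\ -n^{-1/2}\!\int_0^n\!Z^{(1)}_t\delta Y^{(1)}_t\Big),
\]
whose first $p+1$ coordinates are first-chaos (Wiener) integrals with respect to the Gaussian process $Y^{(1)}$ and whose last coordinate lives in the second Wiener chaos. One then has $n^{-1/2}\widetilde R_n=A\,F_n+o_{L^2(\Omega)}(1)$, where $A$ is the $(p+1)\times(p+2)$ matrix acting as the identity on the first $p$ rows and summing the last two columns on the $(p+1)$-st row, so it is enough to identify the limit law of $F_n$.

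The main step is then a multivariate fourth moment (Peccati--Tudor) theorem applied to $F_n$. First I would compute $\mathrm{Cov}(F_n)$. For the $(p+1)\times(p+1)$ block coming from the first-chaos coordinates, writing $\int_0^n=\sum_{k=1}^n\int_{k-1}^k$, using the isometry for Wiener integrals with respect to $Y^{(1)}$ (Proposition \ref{isometry with respect Y^1}), the period-$1$ periodicity of $\varphi_i$ and $\tilde h$, and the decay of the covariance kernel $r_H$ of $dY^{(1)}$ from \cite{CKM,EET}, a periodization argument in the spirit of the one in the proof of Theorem \ref{asymp. norm. FOU with PF} gives $n^{-1}E\big[(\int_0^nf_k\,dY^{(1)})(\int_0^nf_l\,dY^{(1)})\big]\to\int_0^1\int_0^1 f_k(x)f_l(y)\sum_{m\in\mathbb Z}r_H(x,y+m)\,dx\,dy$ for period-$1$ functions $f_k,f_l$; specializing to $(\varphi_i,\varphi_j)$, $(\varphi_i,\tilde h)$ and $(\tilde h,\tilde h)$ reproduces $\overline G$, $\overline a$ and the first part of $\overline b$. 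The cross-covariances between the first $p+1$ coordinates and the last one vanish identically by orthogonality of Wiener chaoses, and the variance of the last coordinate converges to $\sigma^2:=\lambda(\theta,H)$ by \eqref{identity law Z and G}, \eqref{cv variance FOUSK} and \cite[Theorem 3.2]{AM}. Hence $\mathrm{Cov}(F_n)$ converges to the block matrix $\widetilde\Sigma$ with the above $(p+1)$-block and $\sigma^2$ in the last entry, and one checks $A\widetilde\Sigma A^{\top}=\overline\Sigma$. Marginal asymptotic normality holds coordinate-wise: trivially for the first $p+1$ coordinates, which are Gaussian, and by \cite[Theorem 3.2]{AM} for the last one. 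The Peccati--Tudor criterion then yields $F_n\overset{law}{\longrightarrow}\mathcal N(0,\widetilde\Sigma)$, whence $n^{-1/2}\widetilde R_n=A\,F_n+o_{L^2(\Omega)}(1)\overset{law}{\longrightarrow}\mathcal N(0,\overline\Sigma)$, and the theorem follows.

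I expect the main obstacle to be the periodization of the kernel $r_H$: one must exploit the short-range decay of the covariance of $dY^{(1)}$ (the defining feature of the second-kind process) in order to make sense of $\sum_{m\in\mathbb Z}r_H(x,y+m)$ and to pass the Ces\`aro-type limit inside the double integral uniformly in $x,y\in[0,1]$. This is precisely why the correct normalization here is $\sqrt n$ for every $H\in(1/2,1)$, in contrast with the $n^{1-H}$ normalization (valid only for $H<3/4$) in the first-kind case of Theorem \ref{asymp. norm. FOU with PF}. The asymptotic independence of the first- and second-chaos parts, encoded in the Peccati--Tudor step, is the other delicate point, but it becomes essentially automatic once the covariance convergence and the coordinate-wise CLTs are in place.
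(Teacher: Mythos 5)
Your proposal is correct and follows essentially the same route as the paper: reduce via Slutsky and Proposition \ref{pro:matrice overlineM second kind} to a CLT for $n^{-1/2}\widetilde R_n$, invoke the Peccati--Tudor multivariate criterion for the chaos vector, identify the limiting covariance through the isometry of Proposition \ref{isometry with respect Y^1} and periodization of $r_H$ using its exponential decay, take $\sigma^2$ from \cite[Theorem 3.2]{AM}, and kill the cross terms by chaos orthogonality. Your extra bookkeeping (the auxiliary vector $F_n$, the matrix $A$, and the explicit discarding of the $e^{-\alpha t}\tilde h(0)$ remainder) only makes explicit what the paper compresses into its reduction to periodic $f,g$.
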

\begin{proof}We can write
\begin{eqnarray*}
\sqrt{n}\left(\widetilde{\theta}_{n}-\theta\right)&=&
\left(n\widetilde{Q}_{n}^{-1}\right)\left(\frac{1}{\sqrt{n}}\widetilde{R}_{n}\right)
\end{eqnarray*}
From (\ref{cv of nQ^-1 FOUSK}) we have
$n\widetilde{Q}_{n}^{-1}\rightarrow \overline{M}$ almost surely as
$n\rightarrow\infty$. Then, to prove (\ref{cv in law of theta
FOUSK}) it is sufficient to show that, as $n\rightarrow\infty$
\begin{eqnarray*}
 \frac{1}{\sqrt{n}}\widetilde{R}_{n}
 &\overset{law}{\longrightarrow}&
  \mathcal{N}(0,\overline{\Sigma}).
\end{eqnarray*}
Hence by using the main results of \cite{PT} and \cite{peccati} together with the fact that
$\frac{1}{\sqrt{n}}\widetilde{R}_{n}$ is a vector of multiple
integrals it is sufficient to check the convergence of the
covariance matrix  of $\frac{1}{\sqrt{n}}\widetilde{R}_{n}$ as
$n\rightarrow\infty$.\\
 Since ${X}^{(1)}$ admits the decomposition (\ref{decomp. of X(1)}),
 and
 \begin{eqnarray*}
\frac{1}{\sqrt{n}}\int_{0}^{n}{Z_{t}}^{(1)}\delta Y_{t}^{(1)}
\overset{law}{\longrightarrow}\mathcal{N}(0,\sigma^2)\quad
(\mbox{see } \cite{AM}),
\end{eqnarray*}
and   for every  $1\leq i\leq p$
\begin{eqnarray*}
  E\left(\int_{0}^{n}\varphi_{i}(t)dY^{(1)}_{t}
 \int_{0}^{n}{Z_{t}}^{(1)}\delta Y_{t}^{(1)}\right)
   = E\left(\int_{0}^{n}h(t)dY^{(1)}_{t}
 \int_{0}^{n}{Z_{t}}^{(1)}\delta Y_{t}^{(1)}\right) =
  0,
\end{eqnarray*}
it remains to prove that, if $f$ and $g$ are two  periodic functions
with period 1 then, as $n\rightarrow\infty$,
\begin{eqnarray}
\frac{1}{n}E\left(\int_{0}^{n}f(s)dY^{(1)}_{s}\int_{0}^{n}g(t)dY^{(1)}_{t}\right)
&\longrightarrow&  \int_{0}^{1}\int_{0}^{1}f(x)g(y)\sum_{m\in
\mathbb{Z}}r_H(x,y+m)dxdy.\label{cv inner. scal.. FOUSK}
\end{eqnarray}
 Thanks to
(\ref{inn.scal.FOUSK}),
\begin{eqnarray*}
E\left(\int_{0}^{n}f(s)dY^{(1)}_{s}\int_{0}^{n}g(t)dY^{(1)}_{t}\right)
&=&\sum_{i,j=1}^{n}E\left(\int_{i-1}^{i}f(s)dY^{(1)}_{s}\int_{j-1}^{j}g(t)dY^{(1)}_{t}\right)\\
&=& n\int_{0}^{1}\int_{0}^{1}f(x)g(y)r_H(x,y)dxdy  \\&& +\sum_{i\neq
j=1}^{n}\int_{0}^{1}\int_{0}^{1}f(x)g(y)r_H(x+i,y+j)dxdy.
\end{eqnarray*}
We also have for every $x,y\in[0,1]$
\begin{eqnarray*}
\frac{1}{n}\sum_{i<j=1}^{n}r_H(x+i,y+j)&=&\frac{1}{n}\sum_{i<j=1}^{n}r_H(x,y+j-i)\\
&=& \frac{1}{n}\sum_{m=1}^{n-1}(n-m)r_H(x,y+m)\\
&=&\sum_{m=1}^{n-1}r_H(x,y+m)-\sum_{m=1}^{n-1}mr_H(x,y+m).
\end{eqnarray*}
Since $r_H(x,y+m)\sim H(2H-1)H^{2(H-1)}e^{-(\frac{1}{H}-1)(m+y-x)}$
as $m\rightarrow\infty$, we deduce that for every fixed
$x,y\in[0,1]$
\begin{eqnarray*}\sum_{m=1}^{\infty}r_H(x,y+m)<\infty,
\end{eqnarray*}
and as $n\rightarrow\infty$,
\begin{eqnarray*}\frac{1}{n}\sum_{m=1}^{\infty}mr_H(x,y+m)\longrightarrow0.
\end{eqnarray*}
Combining these convergences with the fact that $r_H$ is symmetric
we conclude (\ref{cv inner. scal.. FOUSK}), which completes the
proof.
\end{proof}

\section{Appendix}
In this section, we briefly recall some basic elements of Malliavin
calculus with respect to fBm which are helpful for some of the
arguments we use. For more details we refer to  \cite{AMN, NP, D.
Nualart}. We also give here some of the technical results used in
various proofs of this paper.
\\
Let $B^H = \left\{B^H_t, t \geq 0\right\}$ be a  fBm  with Hurst
parameter $H \in ( 0 , 1)$ that is     a centered Gaussian process
with the covariance function
\[R_{H}(t,s)=\frac12\left(t^{2H}+s^{2H}-|t-s|^{2H}\right).\] It is well-known that the
covariance function $R_{H}$ can be represented as
\begin{eqnarray*}
  R_{H}(t,s) &=& \int_{0}^{t \wedge s}K_{H}(t,u)K_{H}(s,u)du
\end{eqnarray*}
where, in the case when $H>\frac{1}{2}$, the kernel $K_{H}$ has a
explicit expression given by
\begin{eqnarray*}
  K_{H}(t,s) &=&c_{H}s^{\frac{1}{2}-H}\int_{s}^{t}(u-s)^{H-\frac{3}{2}}u^{H-\frac{1}{2}}du,\quad
  s<t,
\end{eqnarray*}
where
$c_{H}=(H-\frac{1}{2})\left(\frac{2H\Gamma(\frac{3}{2}-H)}{\Gamma(H+\frac{1}{2})\Gamma(2-2H)}\right)^{\frac{1}{2}}$.
\\
We denote by $\mathcal E$ the set of step $\R-$valued functions on
$\oT$. Let $\mathcal H$ be the Hilbert space defined as the closure
of $\mathcal E$ with respect to the scalar product
$$
\left \langle\1_{[0,t]}, \1_{[0,s]}\right \rangle_{\mathcal
H}=R_H(t,s).
$$
We denote by $|\cdot|_{\cal H}$ the associated norm. The mapping
$\displaystyle \1_{[0,t]} \mapsto B_{t}$ can be extended to an
isometry between $\mathcal H$ and the Gaussian space associated with
$B$. We denote this isometry by
\begin{equation}\label{wienerfrac}
\ffi\mapsto B(\ffi)=\int_0^T \ffi(s)dB_s.
\end{equation}
When $H  \in (\frac{1}{2}, 1)$, it is well known that the elements
of ${\cal{H}}$ may not be functions but distributions of negative
order. It will be more convenient to work with a subspace of $\cal
H$ which contains only functions. Such a space is the set $\vert
{\cal{H}}\vert$ of all measurable functions $\ffi$ on $[0,T]$ such
that
\begin{equation*}
|\ffi|^2_{|\cal H|}:=H (2H -1)\int _{0}^{T} \int _{0}^{T} \vert
\ffi(u) \vert \vert \ffi(v)\vert \vert u-v\vert ^{2H -2} dudv
<\infty.
\end{equation*}
If $\ffi,\psi\in|\cal H|$ then
\begin{equation}\label{iso}
E\big[B(\ffi)B(\psi)\big]=H (2H -1)\int _{0}^{T} \int _{0}^{T}
\ffi(u) \psi(v) \vert u-v\vert ^{2H -2} dudv.
\end{equation}
We know that $(|\cal H|, \langle \cdot,\cdot\rangle_{|\cal H|})$ is
a Banach space, but that $(|\cal H|, \langle\cdot,\cdot\rangle_{\cal
H})$ is not complete. We have the dense inclusions $ L^{2}([0,T])
\subset L^{\frac{1}{H}}([0,T]) \subset \vert {\cal{H}}\vert \subset
{\cal{H}}. $
 Let us introduce the linear operator $K_{H}^{\ast}$ between $\mathcal{E}$ and $L^{2}[0,T]$ defined by
\begin{eqnarray*}
  (K_{H}^{\ast}\varphi)(s) &=& \int_{s}^{T}\varphi(t)\frac{\partial K_{H}}{\partial t}(t,s)dt. \nonumber
\end{eqnarray*}
The operator $K_{H}^{\ast}$ is an isometry that can be extended to
$\mathcal{H}$. Moreover, the process $W = \left\{W_t,
t\in[0,T]\right\}$  given by
\begin{eqnarray}\label{BM}
  W_{t} &:=& B^H((K^{\ast}_{H})^{-1}(1_{[0,t]}))
\end{eqnarray}
is a Brownian motion. In addition, the processes $B^H$ and $W$ are
related through the integral representation
\begin{eqnarray*}\label{integ. repres.}
  B^H_{t} &=& \int_{0}^{t} K_{H}(t,s)dW_{s}.
\end{eqnarray*}
Let $\cal S$ be the set of all smooth cylindrical random variables,
which can be expressed as $F = f(B^H(\phi_1), \ldots, B^H(\phi_n))$
where $n\geq 1$, $f : \R^n \rightarrow \R$ is a
$\mathcal{C}^\infty$-function such that $f$ and all its derivatives
have at most polynomial growth, and $\phi_i\in\cal H$,
$i=1,\ldots,n$. The Malliavin derivative of $F$ with respect to
$B^H$ is the element of $L^2(\Omega, \HH)$ defined by
\[
D_sF\; =\; \sum_{i =1}^n \frac{\partial f}{\partial
x_i}(B^H(\phi_1), \ldots, B^H(\phi_n)) \phi_i(s), \quad s \in [0,T].
\]
In particular $D_s B^H_t = \Un_{[0,t]}(s)$. As usual, $\sk^{1,2}$
denotes the closure of the set of smooth random variables with
respect to the norm
$$\| F\|_{1,2}^2 \; = \; E[F^2] +
E\big[|DF|_{\HH}^2\big].$$
 Moreover,  for any $F\in
\mathbb{D}_{W}^{1,2}$,
\begin{eqnarray*}
  K_{H}^{\ast}DF &=& D^{W}F,
\end{eqnarray*}
where $D^{W}$ denotes the Malliavin derivative operator with respect
to  $W$, and $\mathbb{D}^{1,2}_{W}$ the corresponding Hilbert space.
\\
 The Skorohod integral with respect to $B^H$ denoted by $\delta$  is the
adjoint of the derivative operator $D$. If a random variable $u \in
L^{2}(\Omega, \HH)$ belongs to the domain  of the Skorohod integral
(denoted by ${\rm dom}\delta$), that is, if it verifies
$$
|E\langle DF,u\rangle_{\cal H}|\leq c_u\,\sqrt{E[F^2]}\quad\mbox{for
any }F\in{\cal S},
$$
then  $\delta(u)$ is defined by the duality relationship
$$ E[F \delta(u)]= E\big[\langle DF, u \rangle_{\HH}\big], $$ for every $F \in \sk^{1,2}$.
In the sequel, when $t\in[0,T]$ and $u\in{\rm dom} \delta$, we shall
sometimes write $\int_0^t u_s \delta B^H_s$ instead of
$\delta(u\Un_{[0,t]})$. If $g\in \HH$, notice moreover that
$\int_0^T g_s\delta B^H_s=\delta(g)=B^H(g)$.
\\
It is known that the multiple Wiener integrals   satisfy a
hypercontractivity property, which implies that for any $F$  having
the form of a finite sum of multiple integrals, we have
\begin{equation}
\left( E\big[|F|^{p}\big]\right) ^{1/p}\leqslant c_{p,q}\left( E\big[|F|^{2}%
\big]\right) ^{1/2}\ \mbox{ for any }p\geq 2.
\label{hypercontractivity}
\end{equation}

One can also develop a Malliavin calculus for any continuous
Gaussian process G of the form (see \cite{AMN})
\begin{eqnarray*}
  G_{t} &=& \int_{0}^{t}K(t,s)dW_{s} \nonumber
\end{eqnarray*}
where $W$ is a Brownian motion and the kernel $K$  satisfying
$\sup_{t\in[0,T]}\int_{0}^{t}K(t,s)^{2}ds<\infty$. Consider the
linear operator $K^{\ast}$ from $\mathcal{E}$ to $L^{2}[0,T]$
defined by
\begin{eqnarray*}
   (K^{\ast}\varphi)(s) &=& \varphi (s)K(T,s)+\int_{s}^{T}[\varphi (t)-\varphi (s)]K(dt,s).\nonumber
\end{eqnarray*}
The Hilbert space $\mathcal{H}_G$ generated by covariance function
of the Gaussian process $G$ can be represented as
$\mathcal{H}_G=(K^{\ast})^{-1}(L^{2}[0,T])$ and
$\mathbb{D}_{G}^{1,2}(\mathcal{H}_G)=(K^{\ast})^{-1}(\mathbb{D}^{1,2}_{W}(L^{2}[0,T])).$
For any $n\geq1,$ let $\mathfrak{H}_{n}$ be the nth Wiener chaos of
$G$, i.e. the closed linear subspace of $L^{2}(\Omega)$ generated by
the random variables $\{H_{n}(G(\varphi)),\varphi \in
\mathcal{H},\|\varphi\|_{\mathcal{H}}=1\},$ and $H_{n}$ is the nth
Hermite polynomial. It is well known that the mapping
$I^{G}_{n}(\varphi^{\otimes n})=n!H_{n}(G(\varphi))$ provides a
linear isometry between the symmetric tensor product
$\mathcal{H}^{\odot n}$ and subspace $\mathfrak{H}_{n}$.
Specifically, for all $f\in\mathcal{H}_G^{\odot p},
f\in\mathcal{H}_G^{\odot q}$ and $p,q\geq 1$, one has
\begin{equation*}
E\big[I^{G}_p(f)I^{G}_q(g)\big]=\delta_{pq}q!\langle
f,g\rangle_{\mathcal{H}_G^{\otimes q}}.
\end{equation*}
 We say that the kernel K is regular if for all $s\in [0,T),$ K(.,s) has bounded variation on the interval $(s,T],$
  and
\begin{eqnarray}
  \int_{0}^{T}|K|((s,T],s)^{2}ds &<& \infty. \nonumber
\end{eqnarray}
For regular kernel K, put $K(s^{+},s):=K(T,s)-K((s,T],s)$. For any
$\varphi \in \mathcal{E},$ define the seminorm
\begin{eqnarray}
  \|\varphi\|^{2}_{Kr} &=& \int_{0}^{T} \varphi(s)^{2}K(s^{+},s)^{2}
  +\int_{0}^{T}\left(\int_{s}^{T}|\varphi(t)||K|(dt,s) \right)^{2}ds.\nonumber
\end{eqnarray}
Denote by $\mathcal{H}_{Kr}$ the completion of $\mathcal{E}$ with
respect to seminorm $\|\|_{Kr}$. \\
 The following
proposition establishes the relationship between path-wise integral
and Skorokhod integral.
\begin{proposition}\label{pro:path-wise and Skorokhod}\cite{AMN} Assume K is a regular kernel with $K(s^{+},s)=0$ and u is a process in $\mathbb{D}_{G}^{1,2}(\mathcal{H}_{K_{r}})$. Then the process u is Stratonovich integrable with respect to G and
\begin{eqnarray}
  \int_{0}^{T}u_{t}dG_{t} &=& \int_{0}^{T} u_{t}\delta G_{t}+\int_{0}^{T}\left(\int_{s}^{T}D_{s}u_{t}K(dt,s)\right)ds.\nonumber
\end{eqnarray}
\end{proposition}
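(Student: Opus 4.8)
The plan is to obtain the identity by approximating the pathwise integral $\int_0^T u_t\,dG_t$ by Riemann--Stieltjes sums, converting each summand into a divergence plus a correction term via the duality between $\delta$ and $D$, and then passing to the limit in the two pieces separately; this is the argument of \cite{AMN}, which I outline below.

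First I would fix a partition $\pi=\{0=t_0<t_1<\dots<t_N=T\}$, put $F_i^\pi=\frac{1}{t_{i+1}-t_i}\int_{t_i}^{t_{i+1}}u_r\,dr$, and recall that by definition $\int_0^T u_t\,dG_t$ is the limit in probability, as $|\pi|\to 0$, of $S_\pi:=\sum_{i=0}^{N-1}F_i^\pi\,(G_{t_{i+1}}-G_{t_i})$. Since $G_{t_{i+1}}-G_{t_i}=G(\mathbf{1}_{(t_i,t_{i+1}]})$ and $F h=\delta(F h)+\langle DF,h\rangle_{\mathcal{H}}$ for $F\in\mathbb{D}_G^{1,2}$ and $h\in\mathcal{H}$, summing this identity over $i$ with $F=F_i^\pi$ and $h=\mathbf{1}_{(t_i,t_{i+1}]}$ gives
\[
S_\pi=\delta(u^\pi)+\sum_{i=0}^{N-1}\langle DF_i^\pi,\mathbf{1}_{(t_i,t_{i+1}]}\rangle_{\mathcal{H}},\qquad u^\pi:=\sum_{i=0}^{N-1}F_i^\pi\,\mathbf{1}_{(t_i,t_{i+1}]}.
\]
For the divergence part, the hypothesis $u\in\mathbb{D}_G^{1,2}(\mathcal{H}_{Kr})$, together with Lebesgue differentiation and the a priori control given by the seminorm $\|\cdot\|_{Kr}$, yields $u^\pi\to u$ in $\mathbb{D}_G^{1,2}$, so closedness of $\delta$ gives $\delta(u^\pi)\to\delta(u)=\int_0^T u_t\,\delta G_t$.

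The hard part will be the correction term, that is, showing $\sum_{i}\langle DF_i^\pi,\mathbf{1}_{(t_i,t_{i+1}]}\rangle_{\mathcal{H}}\to\int_0^T\!\big(\int_s^T D_s u_t\,K(dt,s)\big)\,ds$. My plan here is to transfer the inner product to $L^2[0,T]$ through the isometry $K^\ast$ associated with $K$, which under the standing hypothesis $K(s^+,s)=0$ reduces to $(K^\ast\varphi)(s)=\int_s^T\varphi(t)\,K(dt,s)$; in particular $(K^\ast\mathbf{1}_{(t_i,t_{i+1}]})(\sigma)=\mathbf{1}_{\{\sigma<t_{i+1}\}}\int_{(\sigma\vee t_i,\,t_{i+1}]}K(d\theta,\sigma)$, so each summand becomes an explicit iterated integral in the time variable of $u$, the variable $\sigma$ of $K^\ast$, and the kernel increments. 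As $|\pi|\to 0$ that kernel increment localises at the diagonal; using the bounded variation of $t\mapsto K(t,\sigma)$ on $(\sigma,T]$, a Fubini interchange of the $dr$, $d\sigma$ and $K$-integrations, and dominated convergence justified by the integrability encoded in $\mathcal{H}_{Kr}$, one passes to the limit and identifies it with the asserted trace term. Combining this with the divergence estimate shows $S_\pi$ converges, hence $u$ is Stratonovich integrable and its integral equals $\int_0^T u_t\,\delta G_t+\int_0^T\!\big(\int_s^T D_s u_t\,K(dt,s)\big)\,ds$.

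The principal obstacle is the convergence of this correction term: one must pin down the exact form of the increments of $K^\ast$ (this is precisely where $K(s^+,s)=0$ is indispensable, as it removes the boundary contribution of $K^\ast$), justify the interchanges of integrals, and --- most delicately --- obtain estimates uniform in the partition $\pi$, which is exactly the role of the kernel regularity and of the seminorm $\|\varphi\|_{Kr}^2=\int_0^T\varphi(s)^2 K(s^+,s)^2\,ds+\int_0^T\big(\int_s^T|\varphi(t)|\,|K|(dt,s)\big)^2\,ds$. A secondary point is that the $F_i^\pi$ are not in general smooth cylindrical variables, so I would first establish the identity for $u$ of that form (or for elementary processes) and then close it by density in $\mathbb{D}_G^{1,2}(\mathcal{H}_{Kr})$; the complete argument is carried out in \cite{AMN}.
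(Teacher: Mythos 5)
The paper states this proposition without proof, citing \cite{AMN}, so there is no internal argument to compare against; your outline correctly reconstructs the strategy of that reference. The duality identity $F\,G(h)=\delta(Fh)+\langle DF,h\rangle_{\mathcal H}$ applied to the approximating sums, closedness of $\delta$ for the divergence part, and the transfer of the correction term through $K^{\ast}$ (where $K(s^{+},s)=0$ removes the boundary contribution, leaving $(K^{\ast}\mathbf{1}_{(t_i,t_{i+1}]})(\sigma)=\mathbf{1}_{\{\sigma<t_{i+1}\}}\int_{(\sigma\vee t_i,\,t_{i+1}]}K(d\theta,\sigma)$) are exactly the ingredients used there. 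The only cosmetic difference is that Al\`os--Mazet--Nualart define the Stratonovich integral via the Russo--Vallois regularization $(2\varepsilon)^{-1}\int_0^T u_s\big(G_{(s+\varepsilon)\wedge T}-G_{(s-\varepsilon)\vee 0}\big)\,ds$ rather than partition sums, which changes the bookkeeping but not the structure of the argument.
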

\begin{proposition}\cite[Proposition 4.2]{DFW}\label{cv a.s. nQ{-1}_{n}}
Assume that $\frac{1}{2}<H<1$. Then,  as $n\rightarrow \infty$ we
obtain that $nQ^{-1}_{n}$ converges almost surely to
\begin{eqnarray}
  M &:=&\left(
          \begin{array}{cc}
            I_{p}+ \gamma \Lambda \Lambda^{t} & -\gamma \Lambda \\
            -\gamma \Lambda^{t} & \gamma \\
          \end{array}
        \right) \nonumber
\end{eqnarray}
with
\[
  \Lambda = (\Lambda_{1},\ldots,\Lambda_{p})^{t}:=\left(\int_{0}^{1}\varphi_{1}(t)\tilde{h}(t)dt,\ldots,
  \int_{0}^{1}\varphi_{p}(t)\tilde{h}(t)dt\right)^{\top}\]
  and
  \[ \gamma:=\left(\int_{0}^{1}\tilde{h}^{2}(t)dt+\alpha^{-2H}
  H\Gamma(2H)-\sum_{i=1}^{p}\Lambda^{2}_{i}\right)^{-1},
\]
where $\tilde{h}(t):=e^{-\alpha
t}\sum_{i=1}^{p}\mu_{i}\int_{-\infty}^{t}e^{\alpha
s}\varphi_{i}(s)ds.$
\end{proposition}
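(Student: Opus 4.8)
The plan is to reduce the matrix convergence to two almost sure scalar/vector limits. Since $nQ_n^{-1}$ has the block form with diagonal blocks $I_p+\gamma_n\Lambda_n\Lambda_n^{\top}$ and $\gamma_n$ and off-diagonal blocks $-\gamma_n\Lambda_n$ and $-\gamma_n\Lambda_n^{\top}$, each of its entries is a fixed polynomial in $\gamma_n$ and in the coordinates $\Lambda_{n,1},\dots,\Lambda_{n,p}$; hence, by continuity, it is enough to show that, almost surely as $n\to\infty$, $\Lambda_{n,i}\to\Lambda_i:=\int_0^1\varphi_i(t)\tilde h(t)\,dt$ for each $i=1,\dots,p$, and $\gamma_n\to\gamma$ with $\gamma$ finite and strictly positive (the latter also being what makes $\gamma$, hence $M$, well defined).

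Both convergences follow from two ergodic facts that I would establish together: (i) for every bounded $1$-periodic function $\psi$, $\frac1n\int_0^n\psi(t)X_t\,dt\to\int_0^1\psi(t)\tilde h(t)\,dt$ almost surely; and (ii) $\frac1n\int_0^nX_t^2\,dt\to\int_0^1\tilde h^2(t)\,dt+\alpha^{-2H}H\Gamma(2H)$ almost surely. To prove these I would use (\ref{explicit exp. of FOU PF}) together with (\ref{decomp. h}) and (\ref{decomp. Z}) to write $X_t=\tilde h(t)+\tilde Z_t-e^{-\alpha t}\big(\tilde h(0)+\tilde Z_0\big)$, where $\tilde h$ is $1$-periodic and $\tilde Z$ is stationary and ergodic (see \cite{CKM}), and split $[0,n]=\bigcup_{k=0}^{n-1}[k,k+1)$. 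The contribution of the term $e^{-\alpha t}(\tilde h(0)+\tilde Z_0)$ is $O(1/n)$ almost surely, because $\tilde h(0)+\tilde Z_0$ is a.s.\ finite, $\psi$ is bounded (for $\psi=\varphi_i$ by (\ref{varphi bounded})), the functions $e^{-\alpha t}$ and $e^{-2\alpha t}$ are integrable on $[0,\infty)$, and $\int_0^\infty e^{-\alpha t}\tilde Z_t\,dt$ is an a.s.\ finite random variable (its $L^2$ norm being finite). What remains are Birkhoff averages $\frac1n\sum_{k=0}^{n-1}W_k$ with $W_k=\int_0^1\psi(u)\big(\tilde h(u)+\tilde Z_{u+k}\big)\,du$ (respectively with $\big(\tilde h(u)+\tilde Z_{u+k}\big)^2$ in place of $\psi(u)(\tilde h(u)+\tilde Z_{u+k})$), and $(W_k)_{k\geq0}$ is a stationary ergodic sequence (a measurable factor of $\tilde Z$), so Birkhoff's theorem gives the announced limits once one uses $E[\tilde Z_0]=0$, the stationarity of $\tilde Z$, and the explicit Gaussian computation $E[\tilde Z_0^2]=H(2H-1)\int_{-\infty}^0\int_{-\infty}^0 e^{\alpha(s+u)}|s-u|^{2H-2}\,ds\,du=\alpha^{-2H}H\Gamma(2H)$. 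Alternatively, the genuinely random single pieces $\frac1n\int_0^n\psi(t)Z_t\,dt$ and $\frac1n\int_0^nh(t)Z_t\,dt$ can be handled exactly as in the proof of Proposition \ref{cv a.s. frac{1}{n}R_{n}}, by bounding their $L^2(\Omega)$ norms by a constant times $n^{H-1}$ (using $E[Z_sZ_t]=O(|s-t|^{2H-2})$ and $\sup_tE[Z_t^2]<\infty$) and then invoking Lemma \ref{lemma:principal lemma}.

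Applying (i) with $\psi=\varphi_i$ gives $\Lambda_{n,i}\to\Lambda_i$; combining (ii) with $\Lambda_{n,i}^2\to\Lambda_i^2$ gives $\gamma_n^{-1}\to\int_0^1\tilde h^2(t)\,dt+\alpha^{-2H}H\Gamma(2H)-\sum_{i=1}^p\Lambda_i^2$. Since the $\varphi_i$ are orthonormal in $L^2[0,1]$, Bessel's inequality yields $\sum_{i=1}^p\Lambda_i^2=\sum_{i=1}^p\langle\varphi_i,\tilde h\rangle_{L^2[0,1]}^2\le\int_0^1\tilde h^2(t)\,dt$, so the limit of $\gamma_n^{-1}$ is at least $\alpha^{-2H}H\Gamma(2H)>0$; hence $\gamma_n\to\gamma$ with $\gamma$ finite, and therefore $nQ_n^{-1}\to M$ entrywise, as claimed. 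The one step I expect to be delicate is the almost sure (rather than merely in-probability) convergence of the Gaussian cross-terms — that is, correctly exhibiting the stationary ergodic structure carried by $\tilde Z$, or in the alternative route pinning down the $L^2$ rate; the periodic averaging, the decay of the boundary terms, the evaluation of $E[\tilde Z_0^2]$, and the Bessel argument that $\gamma$ is well defined are all routine.
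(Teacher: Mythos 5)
Your proof is correct and follows essentially the approach the paper relies on: the paper itself does not reprove this proposition (it is imported as \cite[Proposition 4.2]{DFW}), but your route --- decomposing $X_t=\tilde h(t)+\tilde Z_t-e^{-\alpha t}\big(\tilde h(0)+\tilde Z_0\big)$, discarding the integrable exponential remainders, applying the ergodic theorem to the stationary fOU $\tilde Z$ over unit blocks, evaluating $E[\tilde Z_0^2]=\alpha^{-2H}H\Gamma(2H)$, and invoking Bessel's inequality so that $\gamma^{-1}>0$ and hence $\gamma$ is well defined --- is exactly the strategy the paper carries out for the second-kind analogue, Proposition \ref{pro:matrice overlineM second kind}. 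The only delicate point, which you flag yourself, is the ergodicity of the unit-block sequence $(W_k)$; this is fine because the stationary fOU has covariance decaying to zero and is therefore mixing (see \cite{CKM}), so its discrete skeleton, and hence the factor $(W_k)$, is ergodic.
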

\begin{lemma}\cite{AM} \label{lemma:equality in law}Let $\tilde{B}_{t}=B_{t+H}-B_{H}$
 be the shifted fractional Brownian motion. Then there exists a regular Volterra-type kernel $\tilde{L}$,
  in above sense, so that for the solution of the following stochastic differential equation
\begin{eqnarray}
  d\tilde{Z}^{(1)}_{t}&=&-\alpha \tilde{Z}^{(1)}_{t}dt+d\tilde{G}_{t}, \ \
  \tilde{Z}^{(1)}_{0}=0,\label{tildeZ(1)}
\end{eqnarray}
where the Gaussian process
\begin{eqnarray}
  \tilde{G}_{t} &=&
  \int_{0}^{t}\left(K_{H}(t,s)+\tilde{L}(t,s)\right)d\tilde{W}_{s}\label{tildeG}
\end{eqnarray}
with $\tilde{W}$ is a Brownian motion  as in (\ref{BM}).\\
In addition,
$\{Z^{(1)}_{t},t\in[0,T]\}\overset{law}{=}\{\tilde{Z}^{(1)}_{t},t\in[0,T]\}$
with $Z^{(1)}$ is given in (\ref{Z(1)}).
\end{lemma}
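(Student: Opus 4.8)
Since this statement is quoted from \cite{AM}, I only outline the line of argument one would follow. The key is to unwind the deterministic time change hidden in $Y^{(1)}$. Because $a_s=He^{s/H}\geq H$ for $s\geq 0$ and $a_0=H$, the substitution $v=a_s$ (so that $e^{(\alpha-1)s}=H^{-(\alpha-1)H}v^{(\alpha-1)H}$) together with the elementary identity $B^H_v=\tilde B_{v-H}+B^H_H$, valid for $v\geq H$, turns $Z^{(1)}$ from (\ref{Z(1)}) into
\begin{eqnarray*}
Z^{(1)}_t\;=\;e^{-\alpha t}\int_0^t e^{(\alpha-1)s}\,dB^H_{a_s}
\;=\;H^{-(\alpha-1)H}\,e^{-\alpha t}\int_0^{a_t-H}(v+H)^{(\alpha-1)H}\,d\tilde B_v.
\end{eqnarray*}
Thus $Z^{(1)}$ is, up to the deterministic time change $\tau(t):=a_t-H$ and the deterministic weight $(v+H)^{(\alpha-1)H}$, a Wiener integral against the shifted fBm $\tilde B$.

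Next I would substitute the canonical Volterra representation $\tilde B_v=\int_0^v K_H(v,r)\,d\tilde W_r$, where $\tilde W$ is the Brownian motion attached to $\tilde B$ through (\ref{BM}). Since $H>\frac12$ one has $K_H(v,v)=0$, hence $\tilde B$ is absolutely continuous with $d\tilde B_v=\big(\int_0^v\partial_v K_H(v,r)\,d\tilde W_r\big)\,dv$, and a stochastic Fubini theorem produces a genuine Volterra representation $Z^{(1)}_t=\int_0^{\tau(t)}\Phi(t,r)\,d\tilde W_r$ with an explicit kernel $\Phi$. One then reparametrises the integration variable by $r=\tau(\rho)$, absorbing the Jacobian $\sqrt{\tau'}$ into a new Brownian motion (again denoted $\tilde W$), so that everything lives again on the time scale $[0,t]$; differentiating the resulting identity $\int_0^t e^{\alpha s}\,d\tilde G_s= H^{-(\alpha-1)H}\int_0^{\tau(t)}(v+H)^{(\alpha-1)H}\,d\tilde B_v$ in $t$ and reading off the kernel of $\tilde G$ exhibits $\tilde G$ in the announced form $\tilde G_t=\int_0^t\big(K_H(t,s)+\tilde L(t,s)\big)\,d\tilde W_s$, the pure $K_H$-term carrying the diagonal singularity and $\tilde L$ collecting what remains.

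The main obstacle is to verify that $\tilde L$ is a \emph{regular} Volterra kernel in the sense of the Appendix: that $t\mapsto\tilde L(t,s)$ has bounded variation on $(s,T]$, that $\int_0^T|\tilde L|((s,T],s)^2\,ds<\infty$, and that $\tilde L(s^+,s)=0$ --- this last point being exactly what makes Proposition \ref{pro:path-wise and Skorokhod} applicable to $\tilde G$, since $K_H(s^+,s)=0$ as well for $H>\frac12$. This rests on sharp control of $\partial_v K_H(v,r)$, which blows up like $(v-r)^{H-3/2}$ on the diagonal, combined with the smoothness and non-degeneracy on compacts of the weight $(v+H)^{(\alpha-1)H}$ and of the time change $\tau$; one must check that the subtraction of the pure $K_H$-part cancels the non-integrable part of the singularity \emph{exactly}, which it does precisely because of the special form $a_t=He^{t/H}$ (the multiplicative and Jacobian factors then conspire to equal $1$ at leading order on the diagonal). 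Granting all this, the solutions of the Ornstein--Uhlenbeck equation driven by $Y^{(1)}$ and by $\tilde G$ are both measurable functionals of the single Brownian motion $\tilde W$, so $Z^{(1)}$ and $\tilde Z^{(1)}$ have the same law on $[0,T]$ (indeed coincide a.s. once $\tilde W$ is fixed), which is the assertion of the lemma.
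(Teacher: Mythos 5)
This lemma is imported verbatim from \cite{AM}; the paper offers no proof of it, so there is no internal argument to compare against. Your outline does follow the strategy of the cited source: unwind the time change $v=a_s$ to write $Z^{(1)}$ as a Wiener integral of the deterministic weight $(v+H)^{(\alpha-1)H}$ against the shifted fBm $\tilde B$, pass to the underlying Brownian motion via the Volterra representation, reparametrise back to the original time scale, and isolate $K_H$ plus a residual kernel. Two points, however, need repair. First, your justification of the passage from $\tilde B$ to $\tilde W$ is wrong as stated: $\tilde B$ is \emph{not} absolutely continuous (fBm is nowhere differentiable), and the candidate density $\int_0^v\partial_v K_H(v,r)\,d\tilde W_r$ is not even a well-defined $L^2$ variable, since $\partial_v K_H(v,r)\sim c\,(v-r)^{H-3/2}$ gives $\int_0^v(\partial_v K_H(v,r))^2\,dr=\infty$. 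The correct tool is the transfer principle for deterministic integrands, $\int_0^T f(w)\,d\tilde B_w=\int_0^T (K_H^{\ast}f)(r)\,d\tilde W_r$ with $(K_H^{\ast}f)(r)=\int_r^T f(u)\,\partial_u K_H(u,r)\,du$, which requires only $f\in|\mathcal H|$ and lands you at the same formula for $\Phi(t,r)$ without any differentiability of $\tilde B$.

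Second, the entire analytic content of the lemma --- that after subtracting $K_H$ the residual kernel $\tilde L$ has bounded variation in its first argument on $(s,T]$, satisfies $\int_0^T|\tilde L|((s,T],s)^2\,ds<\infty$, and has $\tilde L(s^+,s)=0$ --- is asserted rather than verified; you correctly identify it as the main obstacle but do not carry out the diagonal cancellation you invoke. Since this regularity is precisely what the paper later uses (it is the hypothesis of Proposition \ref{pro:path-wise and Skorokhod} and underlies the variance asymptotics \eqref{cv variance FOUSK}), a self-contained proof cannot leave it as a remark. As a reading of \cite{AM} your sketch is faithful; as a proof it has this gap, plus the caveat that after relabelling the Brownian motion in the reparametrisation step only equality in law survives, so the parenthetical ``coincide a.s.'' should be dropped --- which is harmless, since equality in law is all the lemma claims.
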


We also need the following technical results.
\begin{lemma}\label{lemma:principal lemma}{\cite{KN}} Let $\gamma>0$ and $p_{0}\in \mathbb{N}$. Moreover let $(Z_{n})_{n\in \mathbb{N}}$ be a sequence of random variables. If for every $p\geq p_{0}$ there exists a constant $c_{p}>0$ such that for all $n\in \mathbb{N},$
\begin{eqnarray}
  (E|Z_{n}|^{p})^{1/p} &\leq& c_{p}. n^{-\gamma},\nonumber
\end{eqnarray}
then for all $\varepsilon>0$ there exists a random variable $\eta_{\varepsilon}$ such that
\begin{eqnarray}
  |Z_{n}| &\leq& \eta_{\varepsilon}.n^{-\gamma+\varepsilon} \  \ almost \ \ surely \nonumber
\end{eqnarray}
for all $n\in \mathbb{N}$. Moreover, $E|\eta_{\varepsilon}|^{p}<\infty$ for all $p\geq 1.$
\end{lemma}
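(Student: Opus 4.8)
The plan is to prove the lemma by the standard device of bounding a supremum by a sum of $p$-th moments, i.e. a quantitative Borel--Cantelli argument. Fix $\varepsilon>0$ and define
\[
\eta_{\varepsilon}:=\sup_{n\geq 1} |Z_{n}|\, n^{\gamma-\varepsilon}.
\]
With this choice the desired almost sure inequality $|Z_{n}|\leq\eta_{\varepsilon}\,n^{-\gamma+\varepsilon}$ holds for every $n\in\mathbb{N}$ by construction, so the entire task reduces to showing that $\eta_{\varepsilon}$ is almost surely finite and that $E|\eta_{\varepsilon}|^{p}<\infty$ for every $p\geq 1$.

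First I would control the $p$-th moment of $\eta_{\varepsilon}$ for large $p$. Since each term $|Z_{n}|^{p}n^{(\gamma-\varepsilon)p}$ is nonnegative, the supremum is dominated by the series, hence
\[
E\big[\eta_{\varepsilon}^{p}\big]\;\leq\;\sum_{n\geq 1} E\big[|Z_{n}|^{p}\big]\, n^{(\gamma-\varepsilon)p}.
\]
Next I would pick $p$ large enough that $p\geq p_{0}$ and $\varepsilon p>1$ hold simultaneously; both conditions are satisfied for all sufficiently large $p$. For such $p$ the hypothesis gives $E[|Z_{n}|^{p}]\leq c_{p}^{p}\,n^{-\gamma p}$, so each summand is bounded by $c_{p}^{p}\,n^{-\varepsilon p}$, and $\sum_{n\geq 1}n^{-\varepsilon p}<\infty$. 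This yields $E[\eta_{\varepsilon}^{p}]<\infty$ for all such $p$, and in particular $\eta_{\varepsilon}<\infty$ almost surely.

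Finally I would upgrade the bound from large exponents to every $p\geq 1$: on a probability space Lyapunov's inequality gives $\|\eta_{\varepsilon}\|_{L^{p}}\leq\|\eta_{\varepsilon}\|_{L^{q}}$ whenever $1\leq p\leq q$, so finiteness of the moment for arbitrarily large $q$ forces finiteness for all $p\geq 1$, completing the proof. There is no genuine obstacle; the only points requiring a little care are the joint choice of $p$ meeting both $p\geq p_{0}$ and $\varepsilon p>1$, and the observation that nonnegativity of the summands is what licenses passing from the supremum to the sum. This is essentially the argument of \cite{KN}, which I would simply reproduce.
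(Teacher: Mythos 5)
Your argument is correct: defining $\eta_{\varepsilon}=\sup_{n\geq 1}|Z_{n}|\,n^{\gamma-\varepsilon}$, bounding the supremum by the series, choosing $p\geq p_{0}$ with $\varepsilon p>1$, and then invoking monotonicity of $L^{p}$-norms on a probability space is exactly the standard proof of this lemma. The paper itself gives no proof (it simply cites \cite{KN}), and your argument is essentially the one found there, so there is nothing further to compare.
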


\begin{proposition}\label{isometry with respect Y^1}
Assume that $\frac12<H<1$. Let $f:[0,\infty)\rightarrow\R$  be a
function of class $\mathcal{C}^1$. Then
\begin{eqnarray*}\label{link Y^1 and B}
\int_{s}^{t}f(r)dY^{(1)}_r &=&  \int_{a_s}^{a_t}f(a^{-1}_u)e^{a^{-1}_u}dB_u
\end{eqnarray*}where $a^{-1}_u=H\log(u/H)$. Moreover, for every $f,g$ of $\mathcal{C}^1$
\begin{eqnarray*}
 && E\left(\int_{s}^{t}f(r)dY^{(1)}_r\int_{u}^{v}g(r)dY^{(1)}_r\right) \nonumber\\&=&H(2H-1)\int_{a_s}^{a_t}\int_{a_u}^{a_v}
  f(a^{-1}_x)g(a^{-1}_y)e^{-a^{-1}_x}e^{-a^{-1}_y}|x-y|^{2H-2}dxdy \label{isometry Y^1}\\
   &=&  \int_{s}^{t}\int_{u}^{v}f(w)g(z)r_{H}(w,z)dwdz \label{inn.scal.FOUSK}
\end{eqnarray*}
In particular, we obtain
\begin{eqnarray*}
  E\left((Y_{t}^{(1)}-Y_{s}^{(1)})(Y_{v}^{(1)}-Y_{u}^{(1)})\right) &=&  \int_{s}^{t}\int_{u}^{v}r_{H}(w,z)dwdz
\end{eqnarray*}
where $r_{H}(x,y)$ is a symmetric kernel given by
\begin{eqnarray*}
  r_{H}(w,z) &=& H(2H-1)H^{2(H-1)} \frac{e^{-(1-H)(w-z)/H}}{\left( 1-e^{-(w-z)/H}\right)^{2(1-H)}}.
\end{eqnarray*}
\end{proposition}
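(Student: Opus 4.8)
The plan is to split the claim into a deterministic change-of-variables reduction followed by a single use of the fBm isometry \eqref{iso}. First I would prove the change-of-variables identity
\begin{equation*}
\int_s^t f(r)\,dY_r^{(1)} \;=\; \int_{a_s}^{a_t} f(a_u^{-1})\,e^{-a_u^{-1}}\,dB_u^H ,\qquad a_u^{-1}=H\log(u/H).
\end{equation*}
For a step function $f=\sum_i c_i\,\mathbf{1}_{(t_i,t_{i+1}]}$ the left-hand side is $\sum_i c_i\bigl(Y_{t_{i+1}}^{(1)}-Y_{t_i}^{(1)}\bigr)$; since by definition $Y_r^{(1)}=\int_0^r e^{-v}\,dB^H_{a_v}=\int_{a_0}^{a_r} e^{-a_u^{-1}}\,dB_u^H$, the last integral being the Wiener integral with respect to $B^H$ produced by the deterministic time change $u=a_v$, linearity of the Wiener integral gives $Y_{t_{i+1}}^{(1)}-Y_{t_i}^{(1)}=\int_{a_{t_i}}^{a_{t_{i+1}}} e^{-a_u^{-1}}\,dB_u^H$, and splicing these pieces together yields the identity for step $f$. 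For $f$ of class $\mathcal{C}^1$ I would approximate $f$ by step functions and pass to the limit, using \eqref{iso} over $[a_s,a_t]$ to control the $L^2$-errors. Note that \eqref{iso} applied to $Y_t^{(1)}-Y_s^{(1)}=\int_{a_s}^{a_t}(H/u)^H dB_u^H$ (using $e^{-a_u^{-1}}=(H/u)^H$ and $u\ge H$ on that interval) gives $E[(Y_t^{(1)}-Y_s^{(1)})^2]\le(a_t-a_s)^{2H}\le C|t-s|^{2H}$ on compacts, so $Y^{(1)}$ has $(H-\varepsilon)$-Hölder paths; since $H>1/2$, $\int_s^t f\,dY^{(1)}$ also exists as a Young integral and agrees with this limit, so the object being computed is unambiguous.

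Granting the identity, the first equality in the isometry is immediate from \eqref{iso}:
\begin{equation*}
E\!\left(\int_s^t f\,dY^{(1)}\int_u^v g\,dY^{(1)}\right)
= H(2H-1)\int_{a_s}^{a_t}\!\!\int_{a_u}^{a_v} f(a_x^{-1})g(a_y^{-1})\,e^{-a_x^{-1}}e^{-a_y^{-1}}\,|x-y|^{2H-2}\,dx\,dy .
\end{equation*}
For the second equality I would substitute back $x=a_w=He^{w/H}$, $y=a_z=He^{z/H}$, with Jacobian factors $a_w'\,a_z'=e^{w/H}e^{z/H}$, $e^{-a_x^{-1}}=e^{-w}$, $e^{-a_y^{-1}}=e^{-z}$ and $|x-y|=H\,|e^{w/H}-e^{z/H}|$. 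Taking $w>z$ and writing $e^{w/H}-e^{z/H}=e^{z/H}\bigl(e^{(w-z)/H}-1\bigr)$, one collects the exponential factors ($e^{-w-z}$, the Jacobian $e^{(w+z)/H}$, and $e^{z(2H-2)/H}$ coming from the powered bracket) into $e^{(1-H)(w-z)/H}$, then factors $e^{(w-z)/H}$ out of $\bigl(e^{(w-z)/H}-1\bigr)^{2H-2}$ to rewrite the bracket as $\bigl(1-e^{-(w-z)/H}\bigr)^{2H-2}$ and merges the extra $e^{(2H-2)(w-z)/H}$ with the previous exponential to obtain $e^{-(1-H)(w-z)/H}$; a short bookkeeping check then shows the integrand has collapsed to $f(w)g(z)\,r_H(w,z)$ with $r_H$ exactly as in the statement, and symmetry of $r_H$ covers $w<z$. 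Finally the formula for the increments of $Y^{(1)}$ is the special case $f\equiv g\equiv 1$ on the respective intervals.

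The genuinely delicate point is the first step: giving a clean rigorous meaning to $\int f\,dY^{(1)}$ and to the time-changed integral $\int e^{-v}\,dB_{a_v}^H$, and verifying that the substitution $u=a_v$ really identifies them, despite $B^H$ being only Hölder continuous — this amounts to careful bookkeeping with the smooth increasing bijection $a$. Everything afterwards is routine: the use of \eqref{iso} is immediate, and the algebraic simplification to $r_H$ is elementary, the only thing to monitor being that $r_H(w,z)\sim c_H|w-z|^{2H-2}$ near the diagonal (since $1-e^{-x}\sim x$ as $x\to 0$), which is exactly what keeps all the double integrals involved finite.
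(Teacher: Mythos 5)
Your proposal is correct, and after the first identity it coincides with the paper's argument: both apply the isometry \eqref{iso} to the time-changed Wiener integral and then perform the substitution $x=a_w$, $y=a_z$ to extract the kernel $r_H$; your exponent bookkeeping (collecting $e^{-w-z}$, the Jacobian $e^{(w+z)/H}$, and the factor from $|x-y|^{2H-2}$ into $e^{-(1-H)(w-z)/H}$ over $\bigl(1-e^{-(w-z)/H}\bigr)^{2(1-H)}$) checks out. Note also that you have implicitly corrected a sign typo in the statement: the displayed identity reads $e^{a_u^{-1}}$, but the correct integrand is $e^{-a_u^{-1}}$, which is what both you and the paper's own proof use, and what the covariance formula requires.

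The one place where you genuinely diverge is the proof of the change-of-variables identity $\int_s^t f(r)\,dY_r^{(1)}=\int_{a_s}^{a_t}f(a_u^{-1})e^{-a_u^{-1}}\,dB_u^H$. The paper exploits the $\mathcal{C}^1$ hypothesis directly: it integrates by parts to express $\int_s^t f(r)e^{-r}\,dB_{a_r}$ through boundary terms and an ordinary Lebesgue integral of the continuous path $B_{a_r}$, performs the deterministic substitution $u=a_r$ in that ordinary integral, and then recognizes the result as the integration-by-parts form of $\int_{a_s}^{a_t}f(a_u^{-1})e^{-a_u^{-1}}\,dB_u$. You instead prove the identity for step functions via additivity of increments of $Y^{(1)}$ and pass to the $L^2$-limit using the isometry, with a separate Kolmogorov/Young argument to confirm that the limit agrees with the pathwise integral. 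Both routes work; the paper's is shorter because integration by parts sidesteps the approximation step entirely, while yours is marginally more robust (it does not need $f\in\mathcal{C}^1$ for the isometry itself, only measurability and integrability against the kernel) and makes explicit the identification of $Y_r^{(1)}$ with $\int_{a_0}^{a_r}e^{-a_u^{-1}}\,dB_u^H$, which you correctly flag as the only point requiring care — it follows from the termwise equality of Riemann--Stieltjes sums under the smooth increasing bijection $a$.
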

\begin{proof}Combining  integration by parts and change of variable $u=a_r$ we obtain
\begin{eqnarray*}
\int_{s}^{t}f(r)dY^{(1)}_r &=& \int_{s}^{t}f(r)e^{-r}dB_{a_r}
\\&=&f(t)e^{-t}B_{a_t}-f(s)e^{-s}B_{a_s}-\int_{s}^{t}\left(f(r)e^{-r}\right)'(r)B_{a_r}dr
\\&=&f(t)e^{-t}B_{a_t}-f(s)e^{-s}B_{a_s}-\int_{a_s}^{a_t}\left(f(r)e^{-r}\right)'(a^{-1}_u)B_{a_r}(a^{-1})'(u)du
\\&=&f(t)e^{-t}B_{a_t}-f(s)e^{-s}B_{a_s}-\int_{a_s}^{a_t}\left(f(a^{-1}_u)e^{-a^{-1}_u}\right)'(u)B_{a_r}du
\\&=&  \int_{a_s}^{a_t}f(a^{-1}_u)e^{-a^{-1}_u}dB_u
\end{eqnarray*}
which proves (\ref{link Y^1 and B}). Using  (\ref{link Y^1 and B}) we get
\begin{eqnarray*}
 E\left(\int_{s}^{t}f(r)dY^{(1)}_r\int_{u}^{v}g(r)dY^{(1)}_r\right) &=& H(2H-1)\int_{a_s}^{a_t}\int_{a_u}^{a_v}
  f(a^{-1}_x)g(a^{-1}_y)e^{-a^{-1}_x}e^{-a^{-1}_y}|x-y|^{2H-2}dxdy
\end{eqnarray*}
Making now change of variable $w=a^{-1}_x$ and $z=a^{-1}_y$ we obtain (\ref{isometry Y^1}).
\end{proof}


\begin{thebibliography}{99}

\bibitem{AMN} Alos, E.,  Mazet, O. and  Nualart, D.  (2001).
\emph{Stochastic Calculus with respect to Gaussian processes.} Ann. Probab.
766-801.

\bibitem{AM} Azmoodeh, E. and Morlanes, G. I. (2013). \emph{Drift parameter
estimation for fractional Ornstein-Uhlenbeck process of the second
kind.} Statistics. DOI: 10.1080/02331888.2013.863888.

\bibitem{AV} Azmoodeh, E. and Viitasaari, L. (2015). \emph{Parameter
estimation based on discrete observations of fractional
Ornstein-Uhlenbeck process of the second kind.} Statist. Infer.
Stoch. Proc.  \textbf{18}, no. 3,   205-227.

\bibitem{BI} Brouste, A. and Iacus, S. M. (2012). \emph{Parameter estimation for the
discretely observed fractional Ornstein-Uhlenbeck process and the
Yuima R package.} Comput. Stat.   \textbf{28}, no. 4,
1529-1547.

\bibitem{CKM} P. Cheridito and H. Kawaguchi and M. Maejima.
\emph{Fractional Ornstein-Uhlenbeck processes}. Electronic Journal of Probability, 8, 1-14, 2003.



\bibitem{DFW} H. Dehling, B. Franke and J.H.C. Woerner. \emph{%
Estimating drift parameters in a fractional Ornstein Uhlenbeck
process with periodic mean}. Statist. Infer.
Stoch. Proc., 1-14, 2016.

\bibitem{EEO} El Machkouri, M., Es-Sebaiy, K.  and Ouknine, Y. (2015). \emph{Least
squares estimator for non-ergodic OrnsteinUhlenbeck processes driven
by Gaussian processes.} Journal of the Korean Statistical
Society, DOI: 10.1016/j.jkss.2015.12.001 (In press).

\bibitem{EET} El Onsy, B., Es-Sebaiy, K.  and Tudor, C. (2014).
\emph{Statistical analysis of the non-ergodic fractional
Ornstein-Uhlenbeck process of the second kind.}Preprint.

\bibitem{EEV} El Onsy, B., Es-Sebaiy, K.  and Viens, F. (2014). \emph{Parameter
Estimation for Ornstein-Uhlenbeck driven by fractional
Ornstein-Uhlenbeck processes}. Preprint, in revision for
\emph{Stochastics}.

\bibitem{EN} Es-Sebaiy, K., Ndiaye, D. (2014). \emph{On drift estimation for  discretely
observed non-ergodic fractional Ornstein-Uhlenbeck processes with
discrete observations}. Afr.Stat. 9, 615-625.

\bibitem{EV} Es-Sebaiy, K.,  Viens, F. (2016).
\emph{Optimal rates for parameter estimation of stationary Gaussian
processes.} Preprint: http://arxiv.org/pdf/1603.04542.pdf

\bibitem{HN} \emph{Hu, Y. and Nualart, D.} (2010).
\emph{Parameter estimation for fractional Ornstein Uhlenbeck processes}.
Statistics and Probability Letters, 80, 1030-1038.

\bibitem{HS} Hu, Y. and Song, J. (2013). \emph{Parameter estimation for fractional
Ornstein-Uhlenbeck processes with discrete observations.} \emph{F.
Viens et al (eds), Malliavin Calculus and Stochastic Analysis: A
Festschrift in Honor of David Nualart, 427-442, Springer}.

\bibitem{KS} Kaarakka, T. and Salminen, P. (2011). \emph{On Fractional Ornstein-Uhlenbeck process}.
Communications on Stochastic Analysis, 5, 121-133.


\bibitem{KL} Kleptsyna, M. and Le Breton, A. (2002).
\emph{Statistical analysis of the fractional Ornstein-Uhlenbeck type process}.
Statist. Infer.
Stoch. Proc., 5, 229-241.

\bibitem{KN} Kloeden, P. and Neuenkirch, A. (2007).
\emph{The pathwise convergence of approximation schemes for
stochastic differential equations}.
 LMS J. Comp. Math, 10, 235-253.


\bibitem{NP} Nourdin, I. and Peccati, G. (2012).
\emph{Normal Approximations with Malliavin calculus: From Stein's method to Universality}.
 Cambridge Tracts in Mathematics. Cambridge University.

\bibitem{D. Nualart} Nualart, D. (2006). \emph{The Malliavin calculus and
related topics}. Springer-Verlag, Berlin, 2nd edition.

\bibitem{peccati} Peccati, G. (2007).
\emph{Gaussian Approximations of Multiple Integrals}. Electronic communications in probability, 12, 350-364.

\bibitem{PT} Peccati, G. and Tudor, C.A. (2005). \emph{Gaussian limits for vector-valued multiple stochastic
integrals}. In: S\'eminaire de Probabilit\'es XXXVIII, 247-262, Springer Verlag.

\bibitem{sot-vii} Sottinen, T. and Viitasaari, L. (2016).
 \emph{Parameter Estimation for the Langevin Equation with Stationary-Increment Gaussian Noise}.
  Preprint, in revision for
Statist. Infer. Stoch. Proc.

\end{thebibliography}
\end{document}